\newtheorem{thm}{Theorem}[section]
\newtheorem{cor}[thm]{Corollary}
\newtheorem{lem}[thm]{Lemma}
\theoremstyle{remark}
\newtheorem{rem}[thm]{Remark}
\newtheorem{ex}[thm]{Example}
\newcommand{\R}{\mathbb{R}}
\begin{document}

\title{Asymptotics for optimal design problems for the Schr\"odinger equation with a potential \\\vskip 0.8cm}
\author{Alden Waters\thanks{Department of Mathematics, University of Groningen} \hspace{0.05cm} and Ekaterina Merkurjev\thanks{Department of Mathematics and CMSE, Michigan State University}}

\date{}

\maketitle \vskip 0.5cm
\begin{abstract}
\noindent 

We study the problem of optimal observability and prove time asymptotic observability estimates for the Schr\"odinger equation with a potential in $L^{\infty}(\Omega)$, with $\Omega\subset \mathbb{R}^d$, using spectral theory. An elegant way to model the problem using a time asymptotic observability constant is presented. For certain small potentials, we demonstrate the existence of a nonzero asymptotic observability constant under given conditions and describe its explicit properties and optimal values. Moreover, we give a precise description of numerical models to analyze the properties of important examples of potentials wells, including that of the modified harmonic oscillator. 
 
\end{abstract}

\textbf{Keywords:} Optimal Design, Spectral Theory, Multiscale Peturbation Methods. \\

\textbf{AMS Subject Classifications:} 31B, 35P, 47E, 65M

\vskip 1.0cm

\section{Introduction}\label{intro}
Let $\Omega \subset \mathbb{R}^d$, be a bounded domain with boundary $\partial\Omega$. Let $T>0$, and $\omega$ be a measureable subset of $\Omega$. We consider the Schr\"odinger equation with Dirichlet boundary conditions:
\begin{align}\label{nonlinear}
&i\partial_tu=\Delta u-V(x)u\\& \nonumber
u(0,x)=u_0(x)\\& \nonumber
u(t,x)|_{x\in\partial\Omega}=0 \nonumber.
\end{align}
Here, $u:\mathbb{R}\times \Omega\mapsto \mathbb{C}$, $V(x)\in L^{\infty}(\Omega)$ and $u_0(x)=u(0,x) \in H^1_0(\Omega)\cap H^2(\Omega)$. In some instances, we will require higher regularity, but this will be specified when necessary. 

Let $-\Delta$ indicate the Laplacian on the space $C_0^{\infty}(\Omega)\subset L^2(\Omega)$. This operator is a symmetric operator acting on $L^2(\Omega)$ associated with the quadratic form
\begin{align*}
&Q_0: H_0^1(\Omega)\rightarrow [0,\infty)\\&
Q_0(f)=\int\limits_{\Omega}\nabla f(x)\cdot \overline{\nabla f(x)}\,dx.
\end{align*}
In particular we recall that the quadratic form is closable with respect to the norm
\begin{align*}
Q_D(f)=\left(Q_0(f)+||f||_{L^2(\Omega)}\right)^{1/2}.
\end{align*}
The domain of the closure $Q_D(f)$ is the Sobolev space $H_0^1(\Omega)$. We can define the Dirichlet Laplacian $-\Delta_D$ via this extension procedure and moreover, 
\begin{align}
\mathrm{Dom}((-\Delta_D)^{1/2})=\mathrm{Dom}(Q)=H_0^1(\Omega).
\end{align} 
If $\Omega$ is a bounded domain with boundary $\partial\Omega$ of class $C^2$ then 
\begin{align}
\mathrm{Dom}((-\Delta_D))=H_0^1(\Omega)\cap H^2(\Omega).
\end{align}

All of the functions of this operator are interpreted via the Hilbert space functional calculus. In particular, $\exp(it\Delta_D)$ is unitary, and we exploit this property to build our parametrices. The representation of the solutions presented here in the case of an added potential $V$ is new and relies on applications of advanced spectral theory. 

If we consider the Schr\"odinger equation on a bounded domain $\Omega$ of $\R^d$ with Dirichlet boundary conditions, then observing the restriction of the solutions to a measurable subset $\omega$ of $\Omega$ during a time interval $[0,T]$ with $T>0$, is known as \textit{observability}. The equation (1) is {\it observable} on $\omega$ in time $T$ if there exists $C>0$ such that
\begin{equation}\label{myeq}
C ||\partial_tu(0,x)||_{L^2(\Omega)}^2\leq \int_0^T \int_\omega |\partial_tu(t,x)|^2\ dt\ dx.
\end{equation}
In previous literature, the above inequality is called the observability inequality when $V=0$. 

It is well known that if the pair $(\omega,T)$ satisfies the observability inequality \eqref{myeq}, then the energy of the solutions can be estimated in terms of the energy which is localized in $\omega\times (0,T)$. The search is then for the conditions on $\omega$ for which one can find the largest possible non-negative constant for which the inequality \eqref{myeq} holds. 

We denote the {\it observability constant} by $C^V_T(\chi_\omega)$ to be the largest constant such that \eqref{myeq} holds. The constant can also be formulated as:
\begin{align}
C_T^V(\chi_{\omega})=\left\{ \inf \frac{\int\limits_0^T\int\limits_{\omega}|\partial_tu(t,x)|^2\,dx\,dt}{||\partial_tu(0,x)||_{L^2(\Omega)}^2}| \,\, \, u(0,x)\in H_0^1\cap H^2(\Omega) \right\}.
\label{observability_constant}
\end{align}

The study of the observability constant $C^V_T(\chi_\omega)$ is important, since it gives an account for the well-posedness of the inverse problem of reconstructing $u$ from measurements over $[0,T]\times \omega$. In addition, we denote $C_T^0(\chi_{\omega})$ as the constant associated with the Schr\"odinger equation without a potential. The main novelty of the paper is the analysis in the case of an added potential $V$. 

We now connect the theory to a possible real-life application. Assume that $\Omega\subset \mathbb{R}^d$ is a cavity in which signals are propagating according to (1). To measure the propagating signals, one is allowed to place a few sensors into the cavity. We now assume that, in addition to the placement of the sensors, we are allowed to choose their shape. Therefore, the problem is now of determining the best possible location and shape of the sensors, which will obtain the best observation. Of course, the best choice is to observe the solutions over the whole domain $\Omega$. However, in practice, the domain scanned by the sensors is usually limited, for reasons such as the cost of such an operation. To make this limitation more mathematically precise, we consider measurable subsets of fixed size, i.e. subsets $\omega$ of $\Omega$ such that $|\omega|=L|\Omega|$, where $L \in (0,1)$. The subset $\omega$ represents the sensors in $\Omega$, and they are able to measure restrictions of the solutions of (1) to $\omega$.

    Therefore, one and the most obvious way to model the problem of {\it best observability}, is that of finding the optimal set which maximizes the functional $\chi_\omega \rightarrow C^V_T(\chi_\omega)$ over the set
     \begin{align*}
&\mathcal{M}_{L}=\{\omega \subset \Omega\ |  \ 
 \omega \,\, \textrm{is measurable and of Lebesgue measure}\,\, |\omega|=L|\Omega| \}\ .
\end{align*}
However, we show that this problem is not only inherently difficult to solve, but is not so relevant in practice. Thus, we consider several modifications and simplifications of the model, to be described in the next section.

Optimal observation problems are found in numerous engineering applications, thus providing the motivation for our study. Examples include acoustics, piezoelectric actuators, vibration control in mechanical structures, damage detectors, and chemical reactions \cite{kumar,morris,sigmund,uc,wal}. The goal is to optimize the type and place of the sensors in order to improve the estimation of the overall behavior of the state of the system. 

The main contributions of the paper are the following:

\begin{enumerate}

\item We present an elegant way to model the problem of best observability using the time asymptotic observability constant $C_{\infty}^V(\chi_{\omega})$. We analyze the largest possible $C_{\infty}^V(\chi_{\omega})$, over all $\omega \in \mathcal{M}_{L}$, and we develop conditions analogous to the Quantum Unique Ergodicity conditions in \cite{ptz} for this constant to hold.

\item We demonstrate the conditions on the existence of a positive asymptotic observability constant  $C_{\infty}^V(\chi_{\omega})$ for an arbitrary subset $\omega$ of $\Omega$ and $T>0$, under certain requirements on the potential. Our results are supported by numerical experiments. 
\end{enumerate}

\begin{rem}
The paper \cite{ptz} considers a variety of boundary conditions, but we focus on how to treat the problem with a potential, so we simply impose Dirichlet boundary conditions. Different boundary conditions will be the subject of future study. One could examine the problem on a compact Riemannian manifold $\mathcal{M}$, such that $(\mathcal{M},g)$ has a boundary, and use the Laplace Beltrami-operator $\Delta_g$, and many of the same results would still hold. However, we let $\Omega$ be a subdomain of $\mathbb{R}^d$ for simplicity. 
\end{rem}

\section{Statement of the Main Theorems}

Consider the eigenvalues $(\lambda_j)_{j\in\mathbb{N}}$ and the corresponding eigenfunctions $\phi_j(x)$  for $-\Delta+V(x)$ on $\Omega$. Let $(\lambda_{j0})_{j\in\mathbb{N}}$ and $\phi_{j0}(x)$ denote the eigenvalues and corresponding eigenfunctions of $-\Delta$ on $\Omega$. For the rest of this article we drop the subscript $D$ for the Dirichlet Laplacian. 

We assume the $\phi_j(x)'s$ are orthonormal, and give references to classical spectral theory results which show that they can be used as a basis for $H_0^1(\Omega)$. The solution of (1) can then be represented as
\begin{align}
u(t,x)=\sum\limits_{j=1}^{\infty}c_j\exp(i\lambda_jt)\phi_j(x),
\label{u_expansion}
\end{align}
where $u(0,x)\in H_0^1(\Omega)$ is the initial data to the solution $u(t,x)\in C^0((0,T); H^2(\Omega))$. The sequence
$(c_j)_{j\in\mathbb{N}^*} \in \ell^2(\mathbb{C})$
is determined in terms of $u(0,x)$ as 
\begin{align*}
c_j=\int\limits_{\Omega}u(0,x)\overline{\phi_j(x)}\,dx. 
\end{align*} 
Moreover, 
\begin{align*}
||\partial_tu(0,x)||_{L^2(\Omega)}^2=\sum\limits_{j=1}^{\infty}\lambda_j^2|c_j|^2.
\end{align*}

If 
\begin{align}
G_T^V(\chi_{\omega})= \int\limits_0^T\int\limits_{\omega}|\partial_tu(t,x)|^2\,dx\,dt,
\end{align}
then plugging in expansion \eqref{u_expansion} yields
\begin{align}
&G_T^V(\chi_{\omega})=\\& \nonumber \int\limits_0^T\int\limits_{\omega}|\sum\limits_{j=1}^{\infty}\lambda_jc_j\exp(i\lambda_jt)\phi_j(x)|^2\,dx\,dt=\sum\limits_{j,k=1}^{\infty}\lambda_j\lambda_kc_j\overline{c}_k\alpha_{jk}\int\limits_{\omega}\phi_j(x)\overline{\phi_k(x)}\,dx
\end{align}
with 
\begin{align}
\alpha_{jk}=\int\limits_0^T\exp(i(\lambda_j-\lambda_k)t)\,dt=\frac{2}{\lambda_j-\lambda_k}[\exp(i(\lambda_j-\lambda_k)T-1],
\label{alpha}
\end{align}
whenever $j\neq k$ and $\alpha_{jj}=T$ whenever $j=k$. 

We notice that the determination of the observability constant is now a difficult spectral problem involving many inner products of eigenfunctions over the set $\omega$. Moreover, it is limited in practice, since the observability constant defined by \eqref{observability_constant} describes the worst possible case, which may not occur often in applications. In order to examine the problem further, one can consider the following simplifications: 

\begin{enumerate}
\item One can examine the problem of maximizing $G^V_T(\chi_{\omega})$ over all possible measurable subsets $\omega\in \mathcal{M}_L$, given fixed initial data. In this case, if the optimal set exists, it depends on the initial data that is considered. This problem is still challenging, and also not relevant enough in practice, since initial data is not expected to be fixed, but uniform in nature. Therefore, we focus on the following second simplification, where all initial conditions are taken into account.

\item One can instead consider a time asymptotic observability constant $C_{\infty}^V(\chi_{\omega})$, as in \cite{ptz}. The constant is defined as
\begin{align}
C_{\infty}^V(\chi_{\omega})=\left\{ \inf \lim_{T \rightarrow \infty} \frac{1}{T}\frac{\int\limits_0^T\int\limits_{\omega}|\partial_t u(t,x)|^2\,dx\,dt}{||\partial_tu(0,x)||_{L^2(\Omega)}^2} | u(0,x)\in H_0^1\cap H^2(\Omega) \right\}.
\end{align}
This constant is the non-negative constant for which the time asymptotic observability inequality
\begin{align}
C_{\infty}^V(\chi_{\omega})||\partial_tu(0,x)||_{L^2(\Omega)}^2\leq \lim_{T\rightarrow \infty}\frac{1}{T}\int\limits_0^T\int\limits_{\omega}|\partial_tu(t,x)|^2\,dx\,dt
\end{align}
holds for every $u(0,x)\in H_0^1(\Omega)\cap H^2(\Omega)$. This is where we use the additional assumption $u\in H^2(\Omega)$ so the constant is well-defined. If $\overline{\Omega}$ is of class $C^2$ then this is the entire domain anyway. Shortly, we will show that the time asymptotic observability constant is equal to the {\it randomized observability constant}
\begin{align}
J^V(\chi_{\omega})=\inf\limits_{j\in\mathbb{N}}\int\limits_{\omega}\phi_j^2(x)\,dx.
\end{align}

Note that, from the definition of the observability constant, one obtains the following inequality:
\begin{equation}
    \lim\sup\limits_{T \rightarrow \infty}  \frac{C_{T}^V(\chi_{\omega})}{T} \leq C_{\infty}^V(\chi_{\omega}).
\end{equation}
\end{enumerate}

The randomized observability constant can be derived in the following way. We introduce a field of i.i.d random variables $\{\beta_j\}_{j\in\mathbb{N}}\in \{0,1\}$ which we use to multiply the values of the initial data. Then
\begin{align}
\label{value}
 \inf \lim_{T \rightarrow \infty} \frac{1}{T} \hspace{0.1cm}\mathbb{E}(G_T^V(\chi_{\omega}))=  \inf \lim_{T \rightarrow \infty} \frac{1}{T}  \hspace{0.1cm}\mathbb{E}(\int\limits_0^T\int\limits_\omega|\sum\limits_{j\in\mathbb{N^*}}(-\Delta+V)(c_j\beta_j\phi_j(x))|^2\,dx dt)= \\
 \inf \lim_{T \rightarrow \infty} \frac{1}{T}   \hspace{0.1cm} \mathbb{E}(\int\limits_0^T\int\limits_\omega|\sum\limits_{j\in\mathbb{N^*}} c_j\lambda_j\beta_j\phi_j(x)|^2\,dx dt)
\end{align}
Note that \eqref{value}, under the conditions $$\{\beta_j\}_{j\in\mathbb{N}}\in \{0,1\},\quad \sum\limits_{j}|c_j\lambda_j|^2=1,$$
 is exactly $J^V(\chi_{\omega})=\inf\limits_{j\in\mathbb{N}}\int\limits_{\omega}\phi_j^2(x)\,dx$.

We now state the main theorems of the paper. All of the theorems in this section are formulated for the Schr\"odinger equation \textit{with} a potential, which is the main novelty. The first two results concern an expression for time asymptotic observability constant $C_{\infty}^V(\chi_{\omega})$:
\begin{thm}\label{main1}[Analogue to Theorem 2.6 \cite{ptz}]
For every measureable subset $\omega$ of $\Omega$,
\begin{align}
C_{\infty}^V(\chi_{\omega})=\left\{ \inf \frac{\int\limits_{\omega}\sum_{\lambda\in U}|\sum\limits_{k\in I(\lambda)}c_k\phi_k(x)|^2\,dx}{\sum\limits_{k=1}^{\infty}|c_k|^2}| (c_j)_{j\in\mathbb{N}^*}\in \ell^2(\mathbb{C})\setminus \{0\} \right\},
\end{align}
where $U$ is the set of all distinct eigenvalues $\lambda_k$ and $I(\lambda)=\{j\in\mathbb{N}^*| \lambda_j=\lambda\}$.
\end{thm}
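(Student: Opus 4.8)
The plan is to start from the definition of $C_\infty^V(\chi_\omega)$ as an infimum over initial data of the Cesàro-averaged ratio, substitute the spectral expansion \eqref{u_expansion} for $\partial_t u$, and carry out the $T\to\infty$ averaging explicitly. First I would write, using \eqref{u_expansion} and the formula for $G_T^V$, that
\begin{align*}
\frac{1}{T}G_T^V(\chi_\omega)=\sum_{j,k=1}^\infty \lambda_j\lambda_k c_j\overline{c}_k\,\frac{\alpha_{jk}}{T}\int_\omega \phi_j(x)\overline{\phi_k(x)}\,dx,
\end{align*}
and then observe from \eqref{alpha} that $\alpha_{jk}/T\to 0$ as $T\to\infty$ whenever $\lambda_j\neq\lambda_k$, while $\alpha_{jj}/T=1$. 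The key analytic point is that one may pass the limit inside the double sum: since $(c_j)\in\ell^2$ and the $\lambda_j$ with $u(0,x)\in H^1_0\cap H^2$ satisfy $\sum\lambda_j^2|c_j|^2<\infty$, the tail of the series is uniformly small in $T$ (the off-diagonal terms are bounded by $|\alpha_{jk}/T|\le 2/(T|\lambda_j-\lambda_k|)$ and by the trivial bound $1$, and the eigenfunctions are $L^\infty$-bounded on $\omega$ via elliptic regularity), so dominated convergence applies. This yields
\begin{align*}
\lim_{T\to\infty}\frac{1}{T}G_T^V(\chi_\omega)=\sum_{j,k:\,\lambda_j=\lambda_k}\lambda_j\lambda_k c_j\overline{c}_k\int_\omega\phi_j(x)\overline{\phi_k(x)}\,dx=\int_\omega\sum_{\lambda\in U}\lambda^2\Bigl|\sum_{k\in I(\lambda)}c_k\phi_k(x)\Bigr|^2\,dx,
\end{align*}
where I have regrouped the diagonal block $\{(j,k):\lambda_j=\lambda_k=\lambda\}$ for each distinct eigenvalue $\lambda\in U$.

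Next I would divide by $\|\partial_t u(0,x)\|_{L^2}^2=\sum_j\lambda_j^2|c_j|^2=\sum_{\lambda\in U}\lambda^2\sum_{k\in I(\lambda)}|c_k|^2$ and take the infimum over all nonzero admissible $(c_j)$. The final step is a change of variables that absorbs the eigenvalue weights: set $d_k=\lambda_k c_k$, which is an arbitrary element of $\ell^2(\C)\setminus\{0\}$ since each $\lambda_k>0$ (the Dirichlet Laplacian plus a bounded potential is bounded below, and after a harmless shift one may assume positivity, or more simply note $\lambda_k\neq 0$ so the map $c\mapsto d$ is a bijection of the relevant sequence spaces). Then the quotient becomes exactly
\begin{align*}
\frac{\int_\omega\sum_{\lambda\in U}\bigl|\sum_{k\in I(\lambda)}d_k\phi_k(x)\bigr|^2\,dx}{\sum_{k=1}^\infty|d_k|^2},
\end{align*}
and the infimum over $(d_j)\in\ell^2(\C)\setminus\{0\}$ is the claimed expression for $C_\infty^V(\chi_\omega)$. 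One should also check that enlarging the infimum from the dense set of $H^1_0\cap H^2$ data to all of $\ell^2$ does not change the value, which follows from continuity of the quotient together with a density/approximation argument (truncating $(d_j)$ to finitely many terms gives admissible data and an approximating value of the quotient).

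The main obstacle I anticipate is the rigorous justification of interchanging $\lim_{T\to\infty}$ with the infinite double summation — this is the one genuinely analytic step, and it is where the $H^2$ regularity hypothesis on $u(0,x)$ (hence $\sum\lambda_j^2|c_j|^2<\infty$) is essential, together with the uniform $L^\infty(\omega)$ bound on the $\phi_j$, to produce a summable dominating function independent of $T$. Everything else is bookkeeping: regrouping the diagonal into eigenspace blocks, the bijective reparametrization $d_k=\lambda_k c_k$, and the density argument to extend the infimum to all of $\ell^2(\C)$. I would also remark that this argument runs in parallel to the proof of Theorem 2.6 of \cite{ptz} for the potential-free case, the only new ingredient being that the eigenfunctions $\phi_k$ now solve $(-\Delta+V)\phi_k=\lambda_k\phi_k$, which does not affect the averaging computation since only the orthonormality and completeness of $\{\phi_k\}$ and the boundedness of $V$ (for the elliptic estimates) are used.
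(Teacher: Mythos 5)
Your overall strategy (the diagonal terms survive the Ces\`aro average, the off-diagonal terms vanish, then reparametrize via $d_k=\lambda_k c_k$) is the same as the paper's, but the step you yourself flag as the ``one genuinely analytic step'' --- passing $\lim_{T\to\infty}$ through the infinite double sum by dominated convergence --- does not go through as written. A dominating function independent of $T$ for $\sum_{j,k}\lambda_j\lambda_k c_j\overline{c}_k\,\frac{\alpha_{jk}}{T}\int_\omega\phi_j\overline{\phi_k}\,dx$ would have to majorize $\sum_{j,k}|\lambda_jc_j|\,|\lambda_kc_k|\,\bigl|\int_\omega\phi_j\overline{\phi_k}\,dx\bigr|$; Cauchy--Schwarz only gives $\bigl|\int_\omega\phi_j\overline{\phi_k}\,dx\bigr|\le 1$, so the candidate dominant is $\bigl(\sum_j|\lambda_jc_j|\bigr)^2$, and the hypothesis $u_0\in H^1_0\cap H^2$ only puts $(\lambda_jc_j)$ in $\ell^2$, not $\ell^1$. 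Neither of your auxiliary claims repairs this: the eigenfunctions of $-\Delta+V$ are not uniformly bounded in $L^\infty$ as $j\to\infty$ (their sup norms generically grow with $\lambda_j$), and the bound $|\alpha_{jk}|\le 2/|\lambda_j-\lambda_k|$ still leaves a double series that needs $\ell^1$ summability (plus a uniform spectral gap) to converge. So there is no summable dominating function and DCT cannot be invoked in this form.

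The paper closes exactly this gap by truncation rather than domination: fix $N$, note that for the finite block $\{1,\dots,N\}$ the $T\to\infty$ limit is elementary (finitely many off-diagonal terms, each $\mathcal{O}(1/T)$), and control the two remainders --- the tail $R=\frac1T\int_0^T\int_\omega|\sum_{j>N}y_j|^2\,dx\,dt$ and the cross term $\delta$ --- by enlarging the integration region from $\omega$ to $\Omega$ so that orthonormality and Parseval apply, which bounds both by the tail $\sum_{j>N}\lambda_j^2|c_j|^2\le\varepsilon$ uniformly in $T$. Sending $T\to\infty$ first and then $N\to\infty$ justifies the interchange. If you replace your DCT paragraph with this head-plus-tail argument, the rest of your write-up (regrouping the diagonal into the eigenspace blocks $I(\lambda)$, dividing by $\sum_j\lambda_j^2|c_j|^2$, the substitution $d_k=\lambda_kc_k$, and the density argument extending the infimum to all of $\ell^2(\C)\setminus\{0\}$) is sound, modulo the minor caveat that the reparametrization requires $\lambda_k\neq 0$ for all $k$, which the paper also tacitly assumes.
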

The proof of the theorem is in Section 4. If we set
\begin{align}
J^V(\chi_{\omega})=\inf_{j\in\mathbb{N}}\int\limits_{\omega}\phi_j^2(x)\,dx,
\end{align}
then similarly to Theorem 1 in \cite{ptz}, we have: 
\begin{cor}[Analogous to Corollary 2.7 in \cite{ptz}]\label{cmain1}
The inequality $C_{\infty}^V(\chi_{\omega})\leq J^V(\chi_{\omega})$ is true for every measurable subset $\omega$ of $\Omega$. If the domain $\Omega$ is such that every eigenvalue of $-\Delta+V$ is simple, then
\begin{align}
C_{\infty}^V(\chi_{\omega})=\inf_{j\in\mathbb{N}}\int\limits_{\omega}\phi_j^2(x)\,dx=J^V(\chi_{\omega}) 
\end{align}
for every measurable subset $\omega$ of $\Omega$. This shows that the off-diagonal terms in the eigenfunction expansion contribute less in the infinite time asymptotic regime. \end{cor}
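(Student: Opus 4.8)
The plan is to derive Corollary~\ref{cmain1} directly from Theorem~\ref{main1} by two observations: first, that restricting the infimum to a sparser class of sequences $(c_j)$ can only raise the value, which will give the inequality $C_\infty^V(\chi_\omega)\le J^V(\chi_\omega)$; and second, that when all eigenvalues are simple the cross terms in Theorem~\ref{main1} disappear entirely, so the two quantities coincide.

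For the inequality, I would specialize the formula in Theorem~\ref{main1} to sequences $(c_j)$ supported on a single index $j_0$, i.e.\ $c_j=\delta_{jj_0}$. For such a sequence, $\sum_{k=1}^\infty|c_k|^2=1$, and in the numerator only the eigenvalue $\lambda=\lambda_{j_0}$ contributes a nonzero block; within that block $\sum_{k\in I(\lambda_{j_0})}c_k\phi_k(x)=\phi_{j_0}(x)$, so the numerator is $\int_\omega\phi_{j_0}^2(x)\,dx$. Hence the Rayleigh-type quotient in Theorem~\ref{main1} equals $\int_\omega\phi_{j_0}^2\,dx$ for this choice, and since $C_\infty^V(\chi_\omega)$ is the infimum over \emph{all} nonzero $\ell^2$ sequences, it is bounded above by the infimum over this restricted family, which is exactly $\inf_{j}\int_\omega\phi_j^2\,dx=J^V(\chi_\omega)$.

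For the equality under simplicity of the spectrum: if every eigenvalue of $-\Delta+V$ is simple, then each index set $I(\lambda)$ is a singleton $\{k(\lambda)\}$, and the set $U$ of distinct eigenvalues is in bijection with $\mathbb{N}^*$. Then $\sum_{\lambda\in U}\bigl|\sum_{k\in I(\lambda)}c_k\phi_k(x)\bigr|^2=\sum_{j=1}^\infty |c_j|^2\phi_j^2(x)$, so the quotient in Theorem~\ref{main1} becomes $\bigl(\sum_j |c_j|^2\int_\omega\phi_j^2\,dx\bigr)\big/\bigl(\sum_j|c_j|^2\bigr)$. This is a weighted average of the numbers $a_j:=\int_\omega\phi_j^2\,dx$ with nonnegative weights $|c_j|^2$ summing to a positive quantity, hence it is always $\ge\inf_j a_j=J^V(\chi_\omega)$; combined with the upper bound from the previous paragraph, the infimum is exactly $J^V(\chi_\omega)$. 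The last sentence of the corollary is then just the interpretive remark that the general formula in Theorem~\ref{main1} differs from $J^V(\chi_\omega)$ only through the cross terms $\sum_{k\in I(\lambda)}c_k\phi_k(x)$ in the degenerate blocks, which vanish when the spectrum is simple.

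The main obstacle, such as it is, is bookkeeping rather than depth: one must make sure the formula of Theorem~\ref{main1} is applied on the correct grouping (blocks indexed by distinct eigenvalues, not by $j$), and that the weighted-average argument is phrased carefully enough to handle an infimum that may not be attained — i.e.\ one should argue with the infimum of $\{a_j\}$ directly rather than assuming a minimizing eigenfunction exists. A minor point worth stating explicitly is that $J^V(\chi_\omega)$ is well-defined as an infimum of a bounded sequence (each $a_j\in[0,1]$ since $\|\phi_j\|_{L^2(\Omega)}=1$), so no finiteness issue arises.
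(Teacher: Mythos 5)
Your proposal is correct and follows essentially the same route as the paper: the paper likewise deduces the corollary from Theorem~\ref{main1} by observing that the infimum of the normalized weighted sum $\sum_j w_j\int_\omega\phi_j^2\,dx$ over probability weights equals $\inf_j\int_\omega\phi_j^2\,dx$, which is exactly your combination of testing single-index sequences (upper bound) with the weighted-average lower bound in the simple-spectrum case. Your write-up is in fact more explicit than the paper's one-line justification, but the underlying argument is identical.
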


The more difficult problem is using known results from perturbation theory to find a non-zero observability constant. We show that if $V(x)=\varepsilon V_0(x)$ for some $\varepsilon\in (0,1)$, then under certain conditions on $\varepsilon$, we can find a positive time asymptotic observability constant for the Schr\"odinger equation whenever the corresponding operator without the potential ($V(x)\equiv 0$) has one. 

For Theorem \ref{perturbationtheorem} it is assumed that the potential has regularity $V\in L^{\infty}(\Omega)$.  Let $C(V_0,\Omega)$ be a constant which depends uniformly on the diameter of $\Omega$ and the $L^{\infty}(\Omega)$ norm of $V_0$. This constant will be derived and given explicitly during the course of the proof. We prove:

\begin{thm}\label{perturbationtheorem}
We assume that $-\Delta$ and $-\Delta+V(x)$ on $\Omega$ with Dirichlet boundary conditions both have \emph{simple} spectra, for all $\varepsilon\in (0,\varepsilon_0)$ with fixed $\varepsilon_0$ sufficiently small. When $V(x)=\varepsilon V_0(x)$, $\mathrm{supp}V_0\subset\omega$ and $\varepsilon<\frac{1}{C(V_0,\Omega)}$, the constant $J^V(\chi_{\omega})$ is such that $J^V(\chi_{\omega})>0$ if and only if $J^0(\chi_{\omega})>0$ for the Schr\"odinger equation with $V(x)\equiv 0$.
\end{thm}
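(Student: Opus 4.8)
The plan is to control the perturbed eigenfunctions $\phi_j$ in terms of the unperturbed ones $\phi_{j0}$ quantitatively, uniformly in $j$, and then transfer positivity of $J^0(\chi_\omega)=\inf_j\int_\omega\phi_{j0}^2$ to $J^V(\chi_\omega)=\inf_j\int_\omega\phi_j^2$ and back. The natural tool is second-order (or just first-order with a controlled remainder) Rayleigh--Schr\"odinger perturbation theory for $-\Delta+\varepsilon V_0$. First I would fix the normalized eigenpair $(\lambda_{j0},\phi_{j0})$ of $-\Delta$ and, using simplicity of the spectrum and the spectral gap, write $\phi_j = \phi_{j0} + \varepsilon\, \psi_j + \varepsilon^2 r_j$, where $\psi_j = \sum_{k\neq j}\frac{\langle V_0\phi_{j0},\phi_{k0}\rangle}{\lambda_{j0}-\lambda_{k0}}\phi_{k0}$ is the standard first-order correction and $r_j$ is a remainder. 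The key analytic point is a uniform-in-$j$ bound $\|\phi_j-\phi_{j0}\|_{L^2(\Omega)}\le C(V_0,\Omega)\,\varepsilon$; this is exactly where the constant $C(V_0,\Omega)$, depending on $\|V_0\|_{L^\infty}$ and $\mathrm{diam}(\Omega)$, gets defined, and where the hypothesis $\varepsilon<1/C(V_0,\Omega)$ is used to keep the perturbed eigenfunction within a fixed distance of the unperturbed one.

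Granting such a bound, the transfer is short. For any measurable $\omega$ and any index $j$, by the reverse triangle inequality in $L^2(\omega)$,
\begin{align*}
\Big|\Big(\int_\omega \phi_j^2\Big)^{1/2} - \Big(\int_\omega \phi_{j0}^2\Big)^{1/2}\Big| \le \|\phi_j-\phi_{j0}\|_{L^2(\omega)} \le \|\phi_j-\phi_{j0}\|_{L^2(\Omega)} \le C(V_0,\Omega)\,\varepsilon .
\end{align*}
Taking infima over $j$ gives $|\,J^V(\chi_\omega)^{1/2} - J^0(\chi_\omega)^{1/2}\,|\le C(V_0,\Omega)\,\varepsilon$, so neither infimum can be positive without the other being at least as large as a strictly positive quantity once $\varepsilon$ is small enough relative to $J^0(\chi_\omega)$ (resp. $J^V(\chi_\omega)$). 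Strictly: if $J^0(\chi_\omega)>0$, choose $\varepsilon_0$ so small that $C(V_0,\Omega)\varepsilon_0 < \tfrac12 J^0(\chi_\omega)^{1/2}$, and then $J^V(\chi_\omega)^{1/2}\ge \tfrac12 J^0(\chi_\omega)^{1/2}>0$; the converse direction is symmetric since $-\Delta+\varepsilon V_0$ plays the role of the base operator and $-\varepsilon V_0$ the perturbation, with the same $L^\infty$ and diameter dependence. The hypothesis $\mathrm{supp}\,V_0\subset\omega$ is not needed for this abstract transfer but will be used to make $C(V_0,\Omega)$ sharper and to align with the later numerical discussion; I would remark on that.

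The main obstacle is the uniformity in $j$ of the eigenfunction perturbation bound: naive Rayleigh--Schr\"odinger estimates degrade as the spectral gap $\min_{k\neq j}|\lambda_{j0}-\lambda_{k0}|$ shrinks, and for $-\Delta$ on a general bounded domain gaps need not be bounded below. The way around this is to exploit that $V_0$ is a \emph{bounded} multiplication operator, hence $-\Delta$-form-bounded with relative bound $0$, and to estimate $\|\phi_j-\phi_{j0}\|$ via the resolvent/Riesz-projection formula $\phi_j = \frac{1}{2\pi i}\oint_{\Gamma_j}(z-(-\Delta+\varepsilon V_0))^{-1}\,dz\,\phi_{j0}$ on a contour $\Gamma_j$ of radius comparable to $\lambda_{j0}$ (or to the local gap): the high-frequency decay of $(z+\Delta)^{-1}$ compensates, and Weyl asymptotics plus the uniform $L^\infty$ bound on $V_0$ yield a $j$-independent constant. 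Carrying out this contour estimate cleanly — choosing $\Gamma_j$, bounding the Neumann series $\sum_n \varepsilon^n \big((z+\Delta)^{-1}V_0\big)^n$ uniformly, and translating operator-norm control back to the $L^2$ distance of normalized eigenvectors after re-normalization — is the technical heart of the proof and where the explicit form of $C(V_0,\Omega)$ is extracted.
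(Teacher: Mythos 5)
There is a genuine gap, and it is located exactly where you wave the support hypothesis away. Your transfer argument rests on the additive bound $\|\phi_j-\phi_{j0}\|_{L^2(\Omega)}\le C(V_0,\Omega)\,\varepsilon$, which via the reverse triangle inequality yields only $|J^V(\chi_\omega)^{1/2}-J^0(\chi_\omega)^{1/2}|\le C(V_0,\Omega)\,\varepsilon$. To deduce $J^V(\chi_\omega)>0$ from $J^0(\chi_\omega)>0$ you are then forced to take $\varepsilon$ small compared with $J^0(\chi_\omega)^{1/2}$ --- you say so yourself ("choose $\varepsilon_0$ so small that $C(V_0,\Omega)\varepsilon_0<\tfrac12 J^0(\chi_\omega)^{1/2}$"). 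But the theorem asserts the equivalence for every $\varepsilon<1/C(V_0,\Omega)$, a threshold depending only on $\|V_0\|_{L^\infty}$ and the geometry of $\Omega$, not on $\omega$ or on how small the (a priori unknown, possibly tiny) infimum $J^0(\chi_\omega)$ is. With a fixed such $\varepsilon$, your inequality is vacuous whenever $J^0(\chi_\omega)^{1/2}<C(V_0,\Omega)\varepsilon$, so the "if and only if" is not established. The same objection applies to the converse direction, where you would need smallness relative to $J^V(\chi_\omega)$.

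The paper closes this gap by proving a \emph{multiplicative} (relative) estimate rather than an additive one: $\int_\omega\phi_j^2\,dx=(1+\mathcal{O}(\varepsilon))\int_\omega\phi_{j0}^2\,dx$ with the error term bounded by $\delta\,\|\phi_{j0}\|_{L^2(\omega)}^2$ uniformly in $j$ (Lemma \ref{bound3}), whence $\tfrac12 f_V(j)\le f(j)\le\tfrac32 f_V(j)$ for all $j$ and the equivalence of positivity of the two infima follows for all $\varepsilon$ below a threshold depending only on $V_0$ and $\Omega$. The hypothesis $\mathrm{supp}\,V_0\subset\omega$ is precisely what makes this relative bound possible: it forces $V\phi_{j0}$ (and every term of the reduced-resolvent Neumann series for $(1+VS-(\lambda_\varepsilon-\lambda)S_\alpha)^{-1}V\phi_{j0}$) to be supported in $\omega$, so the inner products over $L^2(\omega)$ can be estimated against $\|\phi_{j0}\|_{L^2(\omega)}^2$ itself rather than against $\|\phi_{j0}\|_{L^2(\Omega)}^2=1$. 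Your claim that the support condition "is not needed for the abstract transfer" inverts the logic of the paper: without it one only gets the additive estimate of Theorem \ref{ngpc}, and the paper explicitly remarks that this is insufficient to conclude $J_\varepsilon(a)=0$ iff $J(a)=0$. Your resolvent/contour strategy for uniformity in $j$ is otherwise in the same Kato spirit as the paper's use of the reduced resolvent $S$ and von Neumann series, but it must be run in $L^2(\omega)$, not $L^2(\Omega)$, for the theorem as stated to follow.
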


The proof of Theorem \ref{perturbationtheorem} is in Section \ref{peturbmsec}. Moreover, in Section \ref{peturbmsec}, we discuss why the assumption the spectra is simple is spectrally sharp, as there are counter-examples to the statement of Theorem \ref{perturbationtheorem}  for non-simple spectra given as a result of \cite{MR,MR2, Macia}. There is also an appendix on convergence of numerical algorithms using these functionals. 

We also consider a relaxation of the problem. In particular, let $\overline{\mathcal{M}}_L$ be the convex closure of the set $\mathcal{M}_L$ in the $L^{\infty}$ weak star topology:
\begin{align}
\overline{\mathcal{M}}_L=\{ a\in L^{\infty}(\Omega,[0,1])| \int\limits_{\Omega}a(x)\,dx=L|\Omega|\}.
\end{align}

We set 
\begin{align}
\inf\limits_{j\in\mathbb{N}*}\int\limits_{\Omega}a(x)\phi_j^2(x)\,dx=J_{\varepsilon}(a)
\end{align}
and also 
\begin{align}
\inf\limits_{j\in\mathbb{N}*}\int\limits_{\Omega}a(x)\phi_{j0}^2(x)\,dx=J(a).
\end{align}
We then have, \begin{thm}\label{ngpc}
Let $V=\varepsilon V_0(x)$ with $V_0(x)\in L^{\infty}(\Omega)$, with no assumption on the support of $V(x)$ in $\Omega$. We assume $-\Delta$ and $-\Delta+V(x)$ with Dirichlet boundary conditions both have simple spectrum, all $\varepsilon\in (0,\varepsilon_0)$ with fixed $\varepsilon_0$ sufficiently small. It follows that for any $a\in \overline{\mathcal{M}}_L$
\begin{align}\label{eqnnV} 
|J_{\varepsilon}(a)-J(a)|\leq C_1(V_0,\Omega)\varepsilon^2
\end{align}
with $C_1(V_0,\Omega)$ a constant depending only on $L^{\infty}(\Omega)$ norm of $V_0$ and $|\Omega|$. As a consequence, we can conclude
\begin{align}
\left|\max_{a\in \overline{\mathcal{M}}_L}J_{\varepsilon}(a)-\max_{a\in \overline{\mathcal{M}}_L}J(a)\right|\leq C_1(V_0,\Omega)\varepsilon^2
\end{align}
\end{thm}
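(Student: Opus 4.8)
\emph{Proof strategy.} The plan is to reduce \eqref{eqnnV} to one eigenfunction comparison that is uniform over the spectrum, and then to run the analytic perturbation theory already set up for Theorem~\ref{perturbationtheorem}. For the reduction, recall that $|\inf_j A_j - \inf_j B_j| \le \sup_j |A_j - B_j|$ for families of reals bounded below; taking $A_j = \int_\Omega a\,\phi_j^2\,dx$ and $B_j = \int_\Omega a\,\phi_{j0}^2\,dx$, it is enough to prove
\[
\sup_{j \in \mathbb{N}^*} \left| \int_\Omega a(x)(\phi_j(x)^2 - \phi_{j0}(x)^2)\,dx \right| \le C_1(V_0,\Omega)\,\varepsilon^2 \qquad \text{for every } a \in \overline{\mathcal{M}}_L .
\]
Granting this, the bound for the maxima is immediate: from $\pm(J_\varepsilon(a) - J(a)) \le C_1 \varepsilon^2$, valid for all $a$, take the supremum over $a \in \overline{\mathcal{M}}_L$ on each side.

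For the eigenfunctions I would invoke Kato--Rellich analytic perturbation theory for the simple eigenpairs of $H_\varepsilon = -\Delta + \varepsilon V_0$, exactly as in the proof of Theorem~\ref{perturbationtheorem}. Since $V_0 \in L^\infty(\Omega)$ is a bounded self-adjoint perturbation of $-\Delta$, each normalized $\phi_{j0}$ continues analytically in $\varepsilon$ near $0$, and with the Kato normalization (so that $\phi_{j1} \perp \phi_{j0}$) one has $\phi_j(\varepsilon) = \phi_{j0} + \varepsilon\,\phi_{j1} + \varepsilon^2 \rho_j(\varepsilon)$, where $\phi_{j1} = \sum_{k \ne j} \langle V_0 \phi_{j0}, \phi_{k0} \rangle (\lambda_{j0} - \lambda_{k0})^{-1} \phi_{k0}$. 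The first technical point is uniformity in $j$: the norms $\|\phi_{j1}\|_{L^2}$ and $\sup_{\varepsilon \in (0,\varepsilon_0)} \|\rho_j(\varepsilon)\|_{L^2}$ must be controlled by a constant depending only on $\|V_0\|_{L^\infty}$ and $|\Omega|$. This requires a uniform lower bound $g > 0$ on the spectral gaps, which I would read off from the hypothesis that both $-\Delta$ and $-\Delta + \varepsilon V_0$ have simple spectrum for \emph{every} $\varepsilon \in (0,\varepsilon_0)$: no two eigencurves may collide on the whole segment, so, since the operator norm of the perturbation is $\varepsilon \|V_0\|_{L^\infty}$, the unperturbed levels are quantitatively separated. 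With a uniform $g$, the Riesz-projection representation $P_j(\varepsilon) = \frac{1}{2\pi i} \oint_{|z - \lambda_{j0}| = g/2} (z - H_\varepsilon)^{-1}\,dz$, expanded via the resolvent identity $(z - H_\varepsilon)^{-1} = (z - H_0)^{-1} + \varepsilon (z - H_0)^{-1} V_0 (z - H_\varepsilon)^{-1}$, gives a geometric series with $j$-independent constants, whence $\|\phi_j(\varepsilon) - \phi_{j0}\|_{L^2} \le C\varepsilon$ and $\|\phi_j(\varepsilon) - \phi_{j0} - \varepsilon \phi_{j1}\|_{L^2} \le C\varepsilon^2$, with $C = C(V_0,\Omega)$, uniformly in $j$.

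It remains to insert the expansion. Writing $\phi_j^2 - \phi_{j0}^2 = 2\phi_{j0}(\phi_j - \phi_{j0}) + (\phi_j - \phi_{j0})^2$ and using $0 \le a \le 1$, the quadratic term contributes at most $\|\phi_j - \phi_{j0}\|_{L^2}^2 \le C^2 \varepsilon^2$, while the cross term equals $2\varepsilon \int_\Omega a\,\phi_{j0} \phi_{j1}\,dx + O(\varepsilon^2)$ with uniform remainder. I expect the genuinely first-order quantity $\int_\Omega a\,\phi_{j0}\phi_{j1}\,dx$ to be the main obstacle. It vanishes for constant $a$, where it equals $a\,\langle \phi_{j0},\phi_{j1}\rangle = 0$ by orthogonality, and the task is to show that it does not affect $\sup_j|\cdot|$ at order $\varepsilon$ for a general $a \in \overline{\mathcal{M}}_L$ --- this is precisely what separates the asserted $\varepsilon^2$ rate from a soft $O(\varepsilon)$ bound. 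The two natural attempts, neither obviously conclusive in full generality, are: a parity argument comparing the expansions at $\pm\varepsilon$, the odd-in-$\varepsilon$ part of $\int_\Omega a\,\phi_j(\varepsilon)^2\,dx$ being carried by $\phi_{j1}$; and an analysis exploiting that the $O(1)$ (small-denominator) contributions to $\phi_{j1}$ come from indices $k$ with $\lambda_{k0}$ close to $\lambda_{j0}$ and decay along the sequence in the regimes relevant to the examples, so that the infimum is attained where the first-order term is already $O(\varepsilon)$. Making one of these work uniformly in $j$ and $a$, and then extracting $C_1(V_0,\Omega)$ by bookkeeping the constants above, is where the genuine difficulty lies.
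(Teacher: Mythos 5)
Your reduction of \eqref{eqnnV} to the uniform bound $\sup_j\bigl|\int_\Omega a(\phi_j^2-\phi_{j0}^2)\,dx\bigr|\le C_1\varepsilon^2$, the splitting $\phi_j^2-\phi_{j0}^2=2\phi_{j0}(\phi_j-\phi_{j0})+(\phi_j-\phi_{j0})^2$, the treatment of the quadratic term via $0\le a\le 1$ together with the uniform Kato estimate $\|\phi_j-\phi_{j0}\|_{L^2(\Omega)}\le C_2\varepsilon$, and the final passage to the maxima are all exactly the skeleton of the paper's proof. The genuine gap is the one you flag yourself: the cross term $2\int_\Omega a\,\phi_{j0}(\phi_j-\phi_{j0})\,dx$ is a priori only $O(\varepsilon)$ for general $a\in\overline{\mathcal{M}}_L$, and neither of your two suggested routes (parity in $\varepsilon$, localization of the small denominators) is carried out. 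As written, your argument therefore establishes $|J_\varepsilon(a)-J(a)|\le C\varepsilon$, not the claimed $\varepsilon^2$ rate.

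You should know that the paper disposes of this same step in one line, via the inequality \eqref{meq}: $|\int_\Omega a(2\phi_{j0}\tilde\phi+\tilde\phi^2)\,dx|\le 3|\int_\Omega a\tilde\phi^2\,dx|$, justified only by the normalization $\|\phi_{j0}\|_{L^2(\Omega)}=1$ and the remark that the ``observation region'' is all of $\Omega$. That justification really covers the case of constant $a$, where the Kato normalization $\langle\phi_j-\phi_{j0},\phi_{j0}\rangle_{L^2(\Omega)}=0$ makes the cross term vanish identically; for nonconstant $a$, weighted Cauchy--Schwarz gives $|\int_\Omega a\phi_{j0}\tilde\phi\,dx|\le(\int_\Omega a\phi_{j0}^2\,dx)^{1/2}(\int_\Omega a\tilde\phi^2\,dx)^{1/2}=O(\varepsilon)$, which is not dominated by $3\int_\Omega a\tilde\phi^2\,dx=O(\varepsilon^2)$. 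So your diagnosis that this first-order term is precisely what separates the asserted $\varepsilon^2$ rate from a soft $O(\varepsilon)$ bound is correct, and the paper does not supply the missing cancellation either. A secondary point: your derivation of a $j$-uniform spectral gap $g>0$ from simplicity of the spectrum of $-\Delta+\varepsilon V_0$ for all $\varepsilon\in(0,\varepsilon_0)$ does not work --- consecutive Dirichlet eigenvalue gaps can shrink to zero along the sequence without any eigencurves crossing at finite index --- and the paper's own uniformity rests on the asserted bound $|\lambda_{j0}-\lambda_{k0}|>C$ for $j\neq k$, which is the same unproved separation hypothesis your constants require.
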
 
The proof of Theorem \ref{ngpc} is in Section \ref{peturbmsec}. Notice that we cannot show $J_{\varepsilon}(a)=0$ iff $J(a)$ zero because we do not have such fine control over the $\mathcal{O}(\varepsilon^2)$ terms unless $a(x)=\chi_\omega(x)$. 

In the last section, using existing software, we show explicit computations and an explicit representation of the observability constant for a variety of potentials including a damped harmonic oscillator.  While the problem for the non-linear Schr\"odinger equation has been investigated from the control theory standpoint \cite{camille, camille2, bingyu, bingyu2}, to the best of the authors knowledge, the problem of observability for potentials has not been addressed in an explicit way using eigenfunctions and numerical methods. Observability for the linear Schr\"odinger equation was examined in \cite{opt}. Our analysis extends their results in the linear case. 

\subsection{Comparison with Previous Literature}
Let $\omega\subset \Omega$ be any nonempty open set and $T>0$ then there exists a constant $K_T(\chi_\omega)$ such that for any $u_0\in H_0^1(\Omega)$ we have 
\begin{align}\label{K}
||u_0||_{L^2(\Omega)}^2\leq K_T(\chi_\omega)\int\limits_0^T||\exp(it\Delta)u_0||^2_{L^2(\omega)}\,dt
\end{align}
or a constant $B_T(\chi_\omega)$ such that for any $u_0\in H_0^1(\Omega)\cap H^2(\Omega)$
\begin{align}\label{B}
||\Delta u_0||_{L^2(\Omega)}^2\leq B_T(\chi_\omega)\int\limits_0^T||\Delta(\exp(it\Delta)u_0)||^2_{L^2(\omega)}\,dt
\end{align}
depending on the domain of the operator.  

In general the work of Lebeau \cite{L} showed that control (the dual statement to the existence of positive constants $B_T(\chi_\omega)$ or $K_T(\chi_\omega)$) for the Schr\"odinger equation with or without the potential holds under the Geometric Control Condition (GCC):

$\bullet$ There exists $L=L(\Omega,\omega)>0$ such that every geodesic of length $L$ on $\Omega$ intersects $\omega$. \\

Therefore if we let $K_T^V(\chi_{\omega})$ denote the constant with the potential, then $K_T^V(\chi_{\omega})>0$ as soon as the GCC is satisfied. The GCC is also necessary in the case of a smooth potential when the geodesic flow is periodic \cite{Macia}. For the flat torus, Jaffard \cite{Jaffard} and Haraux \cite{Haraux} in 2d and Komoronik \cite{Komoronik} in higher dimensions have shown that this not necessary-- observability holds for any open set $\omega$. Their work was extended to operators with smooth potentials in \cite{BZ1,BZ2}, and also for higher dimensions and time-dependent potentials in \cite{AM}, and for irrational torii and general Schr\"ordinger operators in \cite{AFM}. One can see \cite{Jin} for a literature review and extension to hyperbolic manifolds. 

We look at the constant given by \eqref{observability_constant} which we are examining is a different observability constant when the potential is present and this is distinct from that examined in previous literature. However it is closely related to context analyzed in \cite{AML} which is also done for time dependent potentials on the flat disk and other works. Therefore, the main goal here is to identify in which sense the randomised observability constant with the potential and that without, are \emph{close}. 

When there is no potential, our formulation of the observability constant coincides with the definition \eqref{B}. Indeed, for our formulation, one can re-write \eqref{observability_constant} as 
\begin{align}
C_T^V(\chi_\omega)=\left\{ \inf \frac{\int\limits_0^T\int\limits_{\omega}|(-\Delta+V)\exp(it(-\Delta+V))u_0|^2\,dx\,dt}{||(-\Delta+V)u_0||_{L^2(\Omega)}^2}| \,\, \, u_0\in H_0^1\cap H^2(\Omega) \right\}
\end{align}
The positivity of this constant is \emph{not} directly equivalent to the other two when $V$ is nonzero, as $V$ does not commute with $\exp(it(-\Delta+V))$. The only time the existence of the constants $K_T^V(\chi_\omega)$ and $B_T^V(\chi_\omega)$ could imply the positivity of $C_T^V(\chi_\omega)$ directly is when the potential is positive. However, in the important aforementioned literature \cite{AM, AFM,AML, MR2, Macia,L}, there are several cases in which conditions that insure the positivity of these constants are equivalent-manifolds with periodic geodesic flow, flat tori, and the Euclidean disk. In all these cases, the geometric conditions on the observation set $\omega$ do not depend on the presence of the potential, regardless of whether or not this potential is positive or not-- c.f. the introduction to \cite{MR2}. 

Moreover, Theorem \ref{perturbationtheorem} is proved for the \emph{randomised} observability constant $J^V(\chi_{\omega})$ (otherwise known as the observability constant for eigenfunctions \cite{MR}). It is doubtful such a strong statement is true for the full observability constants $C_T^V(\chi_\omega)$ and $C_T^0(\chi_\omega)$ as the presence of cross terms in \eqref{alpha} are difficult to control when the $\lambda_j's$ are large.  Once again, as in \cite{gw}, the randomised constant can be viewed as the optimistic best case scenario. 

Since Theorem \ref{perturbationtheorem} is only true in the case of sufficiently small and regular potentials of compact support, this shows that even in the case of randomised initial data the observability constants (eigenfunction observability constants) can be very close for strong conditions relating $V$ and $\omega$. It is not that the eigenfunction constants cannot be close for $\mathrm{supp}(V)$ not contained in $\omega$, it is just that the current technique gives much less information about controlling the constants in terms of each other. Hence, Theorem 2.4 has a weaker formulation of the relationship of the relaxed constant $J_{\varepsilon}(a)$ with $\varepsilon$ dependent potential to the original one $J(a)$, and there is no assumption on the support of $V$ with respect to $\omega$. In general, showing observability for randomized initial data (otherwise known as observability of eigenfunctions) is possible under conditions on the observation region $\omega$ which are independent of $V$ for generic potentials c.f. \cite{MR2}. 

The main tools in this article are opposite those of the general tract of semi-classical analysis papers. Previous techniques take advantage of the spectral theorem to turn the high frequency eigenvalues $\lambda_j$ into the semi-classical parameter $h^{-2}$.  Heuristically $-h^2\Delta+h^2V$ as a semi-classical operator has symbol $|\xi|^2+h^2V$, while $-h^2\Delta+\varepsilon h^2V$ has symbol $|\xi|^2+h^2\varepsilon V$ but in the latter case the Hamiltonian ray path $x(t)$ over which solutions are concentrated can be made sufficiently close to that of $|\xi|^2$ if $\varepsilon$ is sufficiently small, as long as $h\leq 1$ which is proved in \cite{JW}, Lemma 8.3. The methodology in \cite{JW} fails here because approximate solutions can only be constructed under a non-trapping condition. 

Because we are exploiting the small parameter $\varepsilon$, we use classical perturbation theory techniques rather than semi-classical analysis. Here we see that classical perturbation theory gives new information in the case when the eigenvalues are simple, which cannot be explained by entirely semi-classical techniques. Moreover the results are applicable to any eigenfunction/eigenvalue pair, not just the high frequency ones. 

However, in this particular case examined in this article, if we rescale so that $h=\lambda^{-\frac{1}{2}}$, then the eigenvalue/eigenvector problem becomes $(-h^2\Delta+h^2V)u=u$, with symbol $|\xi|^2+h^2V(x)$, which in the case of the two dimensional flat disk, and the surface of a sphere, can be solved almost explicitly using semi-classical methods to a high degree of success, c.f. \cite{MR, MR2}, corresponding to \emph{high} frequency eigenvalues in this scenario. In other geometries this is not the case, and these are the settings which we seek to begin to resolve in this article. 

\section{Review of Spectral Theory} 
Suppose $\Omega\subset \mathbb{R}^d$ is a bounded domain in $\mathbb{R}^d$. Then, as in the introduction the Laplace operator $-\Delta$ with Dirichlet boundary conditions can be defined as the self-adjoint operator with the quadratic form $Q_0(f)$
\begin{align}
Q_0(f)=<\nabla f,\overline{\nabla f}>_{L^2(\Omega)}
\end{align}
 with domain $H_0^1(\Omega)$. Because the space $H_0^1(\Omega)$ is compactly embedded in $L^2(\Omega)$ by Rellich's theorem, the spectrum of this operator is purely discrete and has infinity as its only possible accumulation point, c.f., \cite{alex} for a review. Hence, there exists an orthonormal basis $(\phi_{j0})_{j\in\mathbb{N}}$ in $L^2(\Omega)$ consisting of eigenfunctions with eigenvalues $(\lambda_j)$, which we assume to be ordered:
\begin{align}\label{ordering}
&-\Delta \phi_{j0}=\lambda_{j0}\phi_{j0}\\& \nonumber
||\phi_{j0}||_{L^2(\Omega)}=1\\& \nonumber
\phi_{j0}|_{\partial\Omega}=0 \\& \nonumber 
0\leq \lambda_{10}\leq \lambda_{20}\leq . . . \nonumber
\end{align}

Recall that a linear subspace $\mathcal{D}$ of the domain of a closed quadratic form $Q$ is called a core for $Q$ if $Q$ is the closure of its restriction to $\mathcal{D}$. We now recall the following result from \cite{Davies}: 
\begin{thm}[Thm 8.2.1 in \cite{Davies}]\label{selfadjoint}
If $0\leq V\in L^1_{loc}(\Omega)$ and $\Omega\subset\mathbb{R}^d$ is a domain in $\mathbb{R}^d$, then the quadratic form
\begin{align*}
\\& Q_V(f)=Q_0(f)+Q_1(f)=\\& \int\limits_{\Omega}|\nabla f|^2+V|f|^2\,dx,
\end{align*}
which is defined on 
\begin{align}
\mathrm{Dom}(Q_V)=\mathrm{Dom}(Q_0)\cap \mathrm{Dom}(Q_1)
\end{align}
is the form of a non-negative self-adjoint operator $H$. The space $C_c^{\infty}(\Omega)$ is a core for $Q$. 
\end{thm}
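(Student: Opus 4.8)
The plan is to prove Theorem 3.1 (Thm 8.2.1 in Davies) by the standard route for constructing self-adjoint operators from quadratic forms, namely by showing that $Q_V$ is a densely defined, non-negative, closed quadratic form, and then invoking the representation theorem (the first representation theorem of Kato) which associates to every such form a unique non-negative self-adjoint operator $H$. The only genuine content beyond this general machinery is the \emph{closedness} of $Q_V$ and the fact that $C_c^\infty(\Omega)$ is a core; density and non-negativity are immediate since $C_c^\infty(\Omega)\subset \mathrm{Dom}(Q_0)\cap\mathrm{Dom}(Q_1)$ is dense in $L^2(\Omega)$ and both $Q_0$ and $Q_1=\int_\Omega V|f|^2\,dx$ are manifestly non-negative (using $V\ge 0$).

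First I would set up the form norm $\|f\|_V^2 = Q_V(f) + \|f\|_{L^2(\Omega)}^2 = \int_\Omega |\nabla f|^2 + (V+1)|f|^2\,dx$ on $\mathrm{Dom}(Q_V)=\mathrm{Dom}(Q_0)\cap\mathrm{Dom}(Q_1)$, and verify completeness: take a $\|\cdot\|_V$-Cauchy sequence $(f_n)$. Then $(f_n)$ is Cauchy in $H_0^1(\Omega)$ (since $Q_D$ is the closed form of $-\Delta_D$), so $f_n\to f$ in $H_0^1(\Omega)$, hence in $L^2(\Omega)$, and along a subsequence pointwise a.e.\ on $\Omega$. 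Simultaneously $(V^{1/2}f_n)$ is Cauchy in $L^2(\Omega)$, so $V^{1/2}f_n\to g$ in $L^2(\Omega)$ and (subsequence) pointwise a.e.; matching the a.e.\ limits gives $g = V^{1/2}f$ a.e., so $f\in\mathrm{Dom}(Q_1)$ and $f_n\to f$ in the form norm. This is the technically delicate step — the main obstacle is the interchange of the two limits (in $H_0^1$ and in the weighted $L^2(V\,dx)$ space), which is handled precisely by extracting a common subsequence converging a.e.\ and using Fatou's lemma to control the tails; because $V\in L^1_{loc}(\Omega)$ only, one cannot use any uniform bound and must argue purely through a.e.\ convergence.

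Next I would establish that $C_c^\infty(\Omega)$ is a core, i.e.\ that it is $\|\cdot\|_V$-dense in $\mathrm{Dom}(Q_V)$. Given $f\in\mathrm{Dom}(Q_V)$, the idea is a two-step approximation: first truncate/cut off to reduce to $f$ with compact support in $\Omega$ and with $f$ bounded (replace $f$ by $\chi_R f$ where $\chi_R$ is a smooth cutoff, and by $f\cdot\mathbf{1}_{|f|\le N}$ suitably smoothed, using dominated convergence in both $\int|\nabla\cdot|^2$ and $\int V|\cdot|^2$ since $V\in L^1_{loc}$ and the support is now compact so $V\in L^1$ there); then mollify, $f_\delta = f * \rho_\delta$, noting $\nabla f_\delta\to\nabla f$ in $L^2$ and, on a fixed compact neighborhood of $\mathrm{supp}\,f$ where $V$ is integrable, $f_\delta\to f$ in $L^2(V\,dx)$ by the standard mollification argument for $L^1_{loc}$ weights (or by boundedness of $f$ after truncation, $\int V|f_\delta - f|^2 \le \|f_\delta-f\|_\infty \int_K V |f_\delta-f| \to 0$). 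Combining, $C_c^\infty(\Omega)$ is a core.

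Finally, with $Q_V$ shown to be densely defined, non-negative, and closed, the first representation theorem yields a unique non-negative self-adjoint operator $H$ with $\mathrm{Dom}(H)\subset\mathrm{Dom}(Q_V)$ and $Q_V(f,g) = \langle Hf, g\rangle$ for $f\in\mathrm{Dom}(H)$, $g\in\mathrm{Dom}(Q_V)$; and since a core for the form is exactly what the statement asserts about $C_c^\infty(\Omega)$, the theorem follows. I would remark that the operator $H$ is what one writes informally as $-\Delta + V$, but the content of the theorem is precisely that this informal expression is made rigorous through the form, with no domain issues provided $V\ge 0$ and $V\in L^1_{loc}$. I expect the proof in the source to simply cite Davies for this standard result, so the proposal above reconstructs what that citation contains.
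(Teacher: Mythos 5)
This statement is quoted verbatim from Davies (Theorem 8.2.1) and the paper supplies no proof of its own beyond the citation, so the only benchmark is the cited source; your reconstruction --- closedness of the form sum $Q_0+Q_1$ via a common a.e.-convergent subsequence and Fatou, the core property via amplitude truncation, spatial cutoff and mollification, and finally the first representation theorem --- is precisely the standard argument that the citation points to, and it is correct in outline. The one step I would tighten is the spatial cutoff: for $f\in H_0^1(\Omega)$ on a bounded domain the term $\int_\Omega |f|^2\,|\nabla\chi_R|^2\,dx$ is not controlled by dominated convergence alone (the gradients of cutoffs vanishing near $\partial\Omega$ blow up), so one should instead first approximate $f$ in the $W_0^{1,2}$-norm by $C_c^\infty$ functions and then use truncation/lattice operations to obtain compactly supported elements of $\mathrm{Dom}(Q_V)$, which is how the cited proof handles it.
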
 

\begin{rem}
We could reduce the assumption on the potential from $L^{\infty}(\Omega)$ to $L^1_{loc}(\Omega)$ using the above theorem in many of the following sections. 
\end{rem}

We also require the following useful result on self-adjoint operators from the same monograph \cite{Davies}:
\begin{thm}[Theorem 8.2.3, Corollary 4.4.3, \cite{Davies}]\label{domain}
If $H$ is defined on $L^2(\Omega)$ by $Hf=-\Delta f+V f$, where $V\in L^{\infty}$, then $H$ is a self-adjoint and bounded below with the same domain as $H_0=-\Delta$. 
\end{thm}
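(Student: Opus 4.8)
The plan is to reduce the claim to the two cited results from Davies by treating the bounded potential $V\in L^\infty(\Omega)$ as a relatively bounded perturbation of $-\Delta$ with relative bound zero. First I would fix the operator $H_0=-\Delta$ with Dirichlet boundary conditions, self-adjoint on $\mathrm{Dom}(H_0)$, which by the discussion in Section 3 equals $H_0^1(\Omega)\cap H^2(\Omega)$ when $\partial\Omega$ is $C^2$ (and in general is the self-adjoint operator attached to the closed form $Q_0$ on $H_0^1(\Omega)$). The key observation is that multiplication by $V$ is a bounded self-adjoint operator on $L^2(\Omega)$, with operator norm $\|V\|_{L^\infty(\Omega)}$: for every $f\in L^2(\Omega)$ one has $\|Vf\|_{L^2(\Omega)}\le \|V\|_{L^\infty(\Omega)}\|f\|_{L^2(\Omega)}$, so in particular $Vf$ makes sense for all $f\in\mathrm{Dom}(H_0)\subset L^2(\Omega)$, and $\mathrm{Dom}(H_0+V)=\mathrm{Dom}(H_0)$ as sets.

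The main step is then to invoke the Kato--Rellich theorem (which is what Theorem 8.2.3 of Davies packages in this setting): if $A$ is self-adjoint and $B$ is symmetric with $\mathrm{Dom}(B)\supset\mathrm{Dom}(A)$ and $B$ is $A$-bounded with relative bound $<1$, then $A+B$ is self-adjoint on $\mathrm{Dom}(A)$ and bounded below. Here $B$ is the bounded operator $V$, so the relative bound is in fact $0$ — one can write $\|Vf\|\le a\|H_0 f\|+b\|f\|$ with $a$ arbitrarily small (indeed $a=0$, $b=\|V\|_{L^\infty}$ works) — and the hypotheses are satisfied trivially. This yields self-adjointness of $H=H_0+V$ on $\mathrm{Dom}(H_0)$. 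For the lower bound: since $\langle Hf,f\rangle = Q_0(f)+\int_\Omega V|f|^2\,dx \ge 0 - \|V\|_{L^\infty(\Omega)}\|f\|_{L^2(\Omega)}^2$, the operator $H$ is bounded below by $-\|V\|_{L^\infty(\Omega)}$; this is exactly the content quoted from Corollary 4.4.3, which phrases the quadratic-form version of this estimate. Alternatively, decomposing $V=V_+-V_-$ with $0\le V_\pm\in L^\infty\subset L^1_{loc}$, one could apply Theorem \ref{selfadjoint} to $-\Delta+V_+$ and then add the bounded form $-\int V_-|f|^2$; but the Kato--Rellich route is cleaner since $V$ is globally bounded.

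I do not expect a serious obstacle here: the statement is essentially a citation, and the only point requiring a word of care is the identification of $\mathrm{Dom}(H)$ with $\mathrm{Dom}(H_0)$ rather than merely $\mathrm{Dom}(Q_0)=H_0^1(\Omega)$ — this is where one genuinely uses that $V$ is $L^\infty$ (so that it maps $\mathrm{Dom}(H_0)$ into $L^2$) and not merely form-bounded, and where the regularity hypothesis on $\partial\Omega$ enters if one wants the concrete description $\mathrm{Dom}(H_0)=H_0^1(\Omega)\cap H^2(\Omega)$. Everything else is immediate from the boundedness of multiplication by $V$ and the two cited theorems of Davies.
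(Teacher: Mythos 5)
Your argument is correct: since multiplication by $V\in L^{\infty}(\Omega)$ is a bounded symmetric operator on $L^2(\Omega)$, Kato--Rellich (with relative bound $0$) gives self-adjointness of $H=H_0+V$ on $\mathrm{Dom}(H_0)$, and $\langle Hf,f\rangle\geq Q_0(f)-\|V\|_{L^{\infty}(\Omega)}\|f\|_{L^2(\Omega)}^2\geq -\|V\|_{L^{\infty}(\Omega)}\|f\|_{L^2(\Omega)}^2$ gives the lower bound. The paper states this result as a citation to Davies without reproducing a proof, and your write-up is exactly the standard argument behind the cited theorems, so there is nothing to add.
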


We also have that 
\begin{thm}[Thm 6.3.1 in \cite{Davies}]\label{asymp}
For all bounded domains $\Omega\subset\mathbb{R}^d$, the operator $-\Delta$ has an empty essential spectrum and compact resolvent. The eigenvalues $\{\lambda_n\}_{n=1}^{\infty}$ of $-\Delta$ written in increasing order and repeated according to multiplicity satisfy 
\begin{align}
b_1n^{\frac{2}{d}}\leq \lambda_n\leq b_2n^{\frac{2}{d}}
\end{align}
for some $b_1,b_2>0$ depending only on the geometry of $\Omega$ and $n\geq1$. 
\end{thm}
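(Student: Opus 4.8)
The plan is to prove both bounds by \emph{Dirichlet bracketing}, reducing the general bounded domain to the case of cubes, for which the eigenvalues are explicit. Two ingredients are needed. First, the variational (min--max) characterization
\[
\lambda_n(\Omega)=\min_{\substack{E\subset H_0^1(\Omega)\\ \dim E=n}}\ \max_{0\neq f\in E}\frac{Q_0(f)}{\|f\|_{L^2(\Omega)}^2},
\]
which immediately yields domain monotonicity: if $\Omega_1\subset\Omega_2$, then extension by zero embeds $H_0^1(\Omega_1)$ isometrically into $H_0^1(\Omega_2)$ and preserves Rayleigh quotients, so every $n$-dimensional test space for $\Omega_1$ is one for $\Omega_2$, whence $\lambda_n(\Omega_2)\le\lambda_n(\Omega_1)$. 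Second, the spectrum of the Dirichlet Laplacian on a cube $(0,\ell)^d$, namely $\frac{\pi^2}{\ell^2}(m_1^2+\cdots+m_d^2)$ with $m\in(\Z_{\ge1})^d$, obtained by separation of variables.

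First I would fix an open cube $Q_{\mathrm{in}}$ of side $\ell_1>0$ with $\overline{Q_{\mathrm{in}}}\subset\Omega$ (possible since $\Omega$ is open and nonempty) and a cube $Q_{\mathrm{out}}$ of side $\ell_2\ge\mathrm{diam}(\Omega)$ with $\Omega\subset Q_{\mathrm{out}}$ (possible since $\Omega$ is bounded). Writing $N_\Omega(\lambda)=\#\{n:\lambda_n(\Omega)\le\lambda\}$ for the counting function, monotonicity gives
\[
N_{Q_{\mathrm{in}}}(\lambda)\ \le\ N_\Omega(\lambda)\ \le\ N_{Q_{\mathrm{out}}}(\lambda)\qquad(\lambda>0).
\]
Next I would estimate the cube counting functions: for a cube of side $\ell$, $N(\lambda)$ is the number of lattice points $m\in(\Z_{\ge1})^d$ with $|m|^2\le \ell^2\lambda/\pi^2$, and a standard comparison of this count with the volume of the positive octant of that ball gives constants $0<c_d\le C_d$ depending only on $d$ with
\[
c_d\Big(\tfrac{\ell^2\lambda}{\pi^2}\Big)^{d/2}\ \le\ N(\lambda)\ \le\ C_d\Big(\tfrac{\ell^2\lambda}{\pi^2}\Big)^{d/2}
\]
for $\lambda\ge\pi^2 d/\ell^2$ (past the first eigenvalue), the constants being adjusted for smaller $\lambda$. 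Combining with the bracketing yields $a_1\lambda^{d/2}\le N_\Omega(\lambda)\le a_2\lambda^{d/2}$ for all $\lambda\ge\lambda_1(\Omega)$, with $a_1,a_2>0$ depending only on $d,\ell_1,\ell_2$, hence only on the geometry of $\Omega$. Finally I would invert this relation: $N_\Omega(\lambda_n)\ge n$ gives the lower bound on $\lambda_n$, while $N_\Omega(\lambda)\le n-1$ for every $\lambda<\lambda_n$ gives the upper bound, and the finitely many small-$n$ values (all positive, by the Poincar\'e inequality on the bounded set $\Omega$) are absorbed into the constants, producing $b_1 n^{2/d}\le\lambda_n\le b_2 n^{2/d}$ for all $n\ge1$.

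The only genuinely delicate point is the lattice-point count $N(\lambda)\asymp\lambda^{d/2}$ with constants \emph{uniform in $\lambda$}: the upper bound follows by attaching to each admissible $m$ the unit cube $m+[0,1)^d$, whose union lies in a slightly dilated octant of the ball, and the lower bound by checking that the octant of a slightly smaller ball is covered by such cubes; both comparisons must be arranged so the inequalities hold for all admissible $\lambda$ rather than merely asymptotically, which is exactly what forces the constant adjustment and the separate trivial treatment of the first few eigenvalues. Everything else is bookkeeping, and this is precisely the argument recorded as Theorem~6.3.1 in \cite{Davies}.
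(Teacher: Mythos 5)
Your argument is correct and is essentially the proof the paper points to: the statement is quoted from Theorem 6.3.1 of Davies, and the paper's own remark immediately afterwards ("the eigenvalues of $-\Delta$ depend monotonically upon the region $\Omega$ and so can be bounded above and below by the eigenvalues of the cubes which are contained (and, respectively, contain) $\Omega$") is exactly your Dirichlet-bracketing scheme, with the cube lattice-point count and the inversion of the counting function filled in as in Davies. The only clause you leave implicit is the "empty essential spectrum and compact resolvent" part, but that also drops out of your comparison with the outer cube, since $\lambda_n(\Omega)\geq\lambda_n(Q_{\mathrm{out}})\to\infty$ forces the min--max values to be genuine eigenvalues tending to infinity.
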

 As such, $b_1$ and $b_2$ can be made arbitrarily close to one another, if $n$ is large c.f. the proof of Theorem 6.3.1 in \cite{Davies}.  The eigenvalues of $-\Delta$ depend monotonically upon the region $\Omega$ and so can be bounded above and below by the eigenvalues of the cubes which are contained (and, respectively, contain) $\Omega$. It follows from Theorem \ref{domain} that $H_0^1(\Omega)=\mathrm{Dom} Q(f)$. From this fact and the Spectral Theorem, we can conclude from Theorem \ref{selfadjoint}:
\begin{cor}\label{ordering}
For $V\in L^{\infty}(\Omega)$, if 
\begin{align}
\lambda_0\leq . . .\leq \lambda_j \leq \lambda_{j+1} \dots
\end{align}
with $\{\phi_j\}_{j\in\mathbb{N}}$ an orthonormal Hilbert basis of $H_0^1(\Omega)$ consisting of eigenfunctions of the Dirichlet operator $-\Delta+V$ on $\Omega$, which is associated with the  eigenvalues $\{\lambda_j\}_{j\in\mathbb{N}}$, then we can write the propagated solution as 
\begin{align}
u(t,x)= \sum\limits_{j\in\mathbb{N}}c_j\phi_j(t,x)
\end{align}
with 
\begin{align}
c_j=\int\limits_{\Omega}u(0,x)\overline{\phi_j(x)}\,dx. 
\end{align}
\end{cor}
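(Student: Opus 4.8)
The plan is to assemble the statement from the three spectral-theory theorems quoted immediately above, so that the work reduces to routine bookkeeping rather than any new estimate. First I would invoke Theorem \ref{domain} (Davies, Thm 8.2.3 / Cor 4.4.3): since $V \in L^\infty(\Omega)$, the operator $H = -\Delta + V$ is self-adjoint and bounded below with $\mathrm{Dom}(H) = \mathrm{Dom}(-\Delta) = H_0^1(\Omega)\cap H^2(\Omega)$, and likewise $\mathrm{Dom}(H^{1/2}) = \mathrm{Dom}(Q) = H_0^1(\Omega)$. Next, because $H$ is bounded below we may add a constant $M$ so that $H + M \geq 1$; then $(H+M)^{-1}$ factors through the compact embedding $H_0^1(\Omega)\hookrightarrow L^2(\Omega)$ (Rellich, as recalled for $-\Delta$ in the text) and is therefore compact. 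Equivalently one cites Theorem \ref{asymp}, noting that the perturbation $V$ is relatively form-bounded with relative bound $0$, so $H$ inherits empty essential spectrum and compact resolvent from $-\Delta$. Hence the spectrum of $H$ is purely discrete, bounded below, with $+\infty$ the only accumulation point, and there is an orthonormal basis $\{\phi_j\}_{j\in\mathbb{N}}$ of $L^2(\Omega)$ consisting of eigenfunctions, with eigenvalues orderable as $\lambda_1 \leq \lambda_2 \leq \cdots$. That this basis also spans $H_0^1(\Omega) = \mathrm{Dom}(Q)$ follows from the standard fact that for a self-adjoint operator with compact resolvent the eigenfunctions form an orthogonal basis of the form domain in the form norm; this is where one cites the classical spectral-theory references alluded to in Section 2.

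With the basis in hand, the solution formula is obtained by the Hilbert-space functional calculus applied to the unitary group $e^{-itH}$. Given $u_0 = u(0,\cdot) \in H_0^1(\Omega)\cap H^2(\Omega)$, expand $u_0 = \sum_j c_j \phi_j$ with $c_j = \langle u_0, \phi_j\rangle_{L^2(\Omega)} = \int_\Omega u_0(x)\overline{\phi_j(x)}\,dx$, which is exactly the stated formula for $c_j$. Since $\phi_j$ is an eigenfunction, $e^{-itH}\phi_j = e^{-i\lambda_j t}\phi_j$, and by continuity of $e^{-itH}$ on $L^2$ together with the functional calculus,
\begin{align*}
u(t,\cdot) = e^{-itH}u_0 = \sum_{j\in\mathbb{N}} c_j e^{-i\lambda_j t}\phi_j,
\end{align*}
with convergence in $L^2(\Omega)$ uniformly in $t$. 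Writing $\phi_j(t,x) := e^{-i\lambda_j t}\phi_j(x)$ (consistent with the notation $u(t,x) = \sum_j c_j \phi_j(t,x)$ in the statement) gives the displayed representation. That $u_0 \in \mathrm{Dom}(H)$ guarantees $u \in C^0([0,T);H^2(\Omega))$, using $\sum_j \lambda_j^2 |c_j|^2 < \infty$ and the domain characterization from Theorem \ref{domain}, so the series also converges in $H^2$ and term-by-term differentiation in $t$ is legitimate.

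There is essentially no single hard step here: the statement is a packaging of Davies' theorems with the abstract spectral theorem. The only point demanding a little care is the claim that $\{\phi_j\}$ is a basis of $H_0^1(\Omega)$ and not merely of $L^2(\Omega)$ — one must check that the form-domain norm $Q_D$ is equivalent to the graph norm of $(H+M)^{1/2}$ and that eigenfunction expansions converge in this norm, which is exactly the content of the functional calculus for $(H+M)^{1/2}$ once compact resolvent is established. The secondary technical item is justifying convergence of the time-differentiated series in $L^2$ (needed implicitly for the later formula $\|\partial_t u(0,x)\|_{L^2}^2 = \sum_j \lambda_j^2|c_j|^2$), which again follows from $u_0 \in \mathrm{Dom}(H)$ and the spectral theorem applied to $H e^{-itH}$. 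Everything else is bookkeeping, and I would present it in the order: (i) self-adjointness and domain from Theorem \ref{domain}; (ii) compact resolvent and discreteness of spectrum from Theorem \ref{asymp} plus relative form-boundedness; (iii) orthonormal eigenbasis of $L^2$ and of $H_0^1$; (iv) the expansion $u_0 = \sum c_j\phi_j$ and identification of $c_j$; (v) application of $e^{-itH}$ via functional calculus to obtain the propagated solution.
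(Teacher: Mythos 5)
Your proposal is correct and takes essentially the same route as the paper, which derives the corollary in one line from Theorems \ref{selfadjoint}, \ref{domain} and \ref{asymp} together with the spectral theorem and the functional calculus for the unitary group; you are simply spelling out the bookkeeping the paper leaves implicit. The only cosmetic discrepancy is the sign of the propagator: with the paper's convention $i\partial_t u = \Delta u - Vu$, i.e.\ $\partial_t u = iHu$ for $H=-\Delta+V$, the group is $e^{itH}$ and the phases are $e^{i\lambda_j t}$ as in \eqref{u_expansion}, not $e^{-i\lambda_j t}$.
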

We use the basis properties in Corollary \ref{ordering} in the next section. 

\section{Proof of Theorem \ref{main1}}
The basic idea is to use 
\begin{align}
u(t,x)=\sum\limits_{j=1}^{\infty}c_j\exp(i\lambda_jt)\phi_j(x)
\end{align}
as the decomposition for the solution of \eqref{nonlinear}, where $(\lambda_j,\phi_j(x))_{j\in\mathbb{N^*}}$ are the eigenvalue and eigenfunction pairs for the $-\Delta+V$ operator. One can apply similar steps as \cite{ptz} to prove Theorem \ref{main1}. Using a standard density argument, the approximation which holds over a finite number of modes,
\begin{align}
u(t,x)\approx\sum\limits_{j=1}^{N}c_j\exp(i\lambda_jt)\phi_j(x),
\end{align}
is enough to describe an observability constant which is valid in the large-time regime. Then, we use previously derived facts about perturbation theory to prove the other theorems. 
 
\begin{proof}[Proof of Theorem \ref{main1}]
We start with the case when $-\Delta+V$ has simple eigenvalues. This proof is a simplification of the analogous theorem in \cite{ptz} which is presented for the wave equation and applicable to the Schr\"odinger equation with no potential. Without loss of generality, one can consider initial data such that 
$||\partial_tu(0,x)||_{L^2(\Omega)}=1$. Then, let
\begin{align}
\Sigma_T=\frac{1}{T}\frac{G_T(\chi_{\omega})}{||\partial_tu(0,x)||_{L^2(\Omega)}}=\frac{1}{T}G_T(\chi_{\omega})
\end{align}
and 
\begin{align}
y_j(t,x)=i\lambda_jc_j\exp(i\lambda_jt)\phi_j(x).
\end{align}
Then, $\Sigma_T$ can be expressed as:
\begin{align}
\Sigma_T=\frac{1}{T}\int\limits_0^T\int\limits_{\omega}\left(|\sum\limits_{j=1}^Ny_j(t,x)|^2+|\sum\limits_{k=N+1}^{\infty}y_k(t,x)|^2+2 \left(\sum\limits_{j=1}^Ny_j(t,x)\sum\limits_{k=N+1}^{\infty}\overline{y_k(t,x)}\right)\,dx\,dt \right).
\label{sigma}
\end{align}
Note that 
\begin{equation}
    C_{\infty}^V(\chi_\omega)= \inf \lim_{T \rightarrow \infty} \Sigma_T.
\end{equation}
Now, we use the assumption that the spectrum of $-\Delta+V$ consists of simple eigenvalues to prove the following result:
\begin{lem}
The following equation holds:
\begin{align}
\lim_{T\rightarrow\infty}\frac{1}{T}\int\limits_0^T\int\limits_{\omega}|\sum\limits_{j=1}^Ny_j(t,x)|^2\,dx\,dt=\sum\limits_{j=1}^N\lambda_j^2|c_j|^2\int\limits_{\omega}\phi_j^2(x)\,dx. 
\end{align}
\end{lem}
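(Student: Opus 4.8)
The plan is to expand the square and exploit the almost-periodic structure of the integrand in $t$, using the simplicity of the eigenvalues to kill all off-diagonal terms. Concretely, write
\[
\Bigl|\sum_{j=1}^N y_j(t,x)\Bigr|^2=\sum_{j,k=1}^N y_j(t,x)\overline{y_k(t,x)}
=\sum_{j,k=1}^N i\lambda_j c_j\,\overline{i\lambda_k c_k}\,e^{i(\lambda_j-\lambda_k)t}\,\phi_j(x)\overline{\phi_k(x)},
\]
and since there are only finitely many terms, interchange the finite sum with $\frac1T\int_0^T(\cdot)\,dt$ and $\int_\omega(\cdot)\,dx$ freely. The only $t$-dependence is in the factor $e^{i(\lambda_j-\lambda_k)t}$, so the whole problem reduces to computing $\lim_{T\to\infty}\frac1T\int_0^T e^{i(\lambda_j-\lambda_k)t}\,dt$.

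The key elementary fact is that $\frac1T\int_0^T e^{i\mu t}\,dt=1$ if $\mu=0$ and equals $\frac{e^{i\mu T}-1}{i\mu T}\to 0$ as $T\to\infty$ if $\mu\neq 0$; note this is exactly $\alpha_{jk}/T$ from \eqref{alpha}. Here is where simplicity of the spectrum enters: $\lambda_j-\lambda_k=0$ forces $j=k$, so in the limit only the diagonal $j=k$ survives, and each diagonal term contributes $|i\lambda_j c_j|^2|\phi_j(x)|^2=\lambda_j^2|c_j|^2\phi_j^2(x)$ (the eigenfunctions being real, or else $|\phi_j(x)|^2$, which integrates to the same thing). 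Integrating over $\omega$ and summing $j$ from $1$ to $N$ gives the claimed right-hand side $\sum_{j=1}^N\lambda_j^2|c_j|^2\int_\omega\phi_j^2(x)\,dx$.

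There is essentially no analytic obstacle here because the sum is finite: all interchanges of limit/sum/integral are justified by finiteness, and $\int_\omega\phi_j^2\,dx\le\int_\Omega\phi_j^2\,dx=1$ keeps every term bounded. The one point that deserves a sentence of care is uniformity of the convergence $\alpha_{jk}/T\to 0$: for each fixed pair $(j,k)$ with $j\ne k$ one has $|\alpha_{jk}/T|\le \frac{2}{|\lambda_j-\lambda_k|\,T}$, and since $N$ is fixed there are finitely many such pairs, so one can pass to the limit termwise. I would therefore present the proof as: (i) expand the modulus squared into the finite double sum; (ii) push $\frac1T\int_0^T\int_\omega$ inside; (iii) evaluate $\frac1T\int_0^T e^{i(\lambda_j-\lambda_k)t}\,dt$ and take $T\to\infty$, invoking simplicity to get the Kronecker delta $\delta_{jk}$; (iv) collect the surviving diagonal terms. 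The main ``obstacle'' is really just bookkeeping — making sure the cross terms genuinely vanish requires the simplicity hypothesis, and this is the only place it is used in the lemma.
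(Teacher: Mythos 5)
Your proof is correct and follows essentially the same route as the paper: expand the square into the finite double sum, identify the time average $\frac1T\int_0^T e^{i(\lambda_j-\lambda_k)t}\,dt=\alpha_{jk}/T$, note that $\alpha_{jj}/T\to1$ while $|\alpha_{jk}|\le \sqrt2/|\lambda_j-\lambda_k|$ is bounded so the off-diagonal terms vanish in the limit, with simplicity of the spectrum guaranteeing that $\lambda_j=\lambda_k$ only for $j=k$. No substantive difference from the paper's argument.
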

Because the sum is finite, one can invert the inf and the limit. We have
\begin{align}
&\frac{1}{T}\int\limits_0^T\int\limits_{\omega}|\sum\limits_{j=1}^Ny_j(t,x)|^2\,dx\,dt=\\&
\frac{1}{T}\sum\limits_{j=1}^N\lambda_j^2\alpha_{jj}\int\limits_{\omega}\phi_j^2(x)\,dx+\frac{1}{T}\sum\limits_{j=1}^N\sum\limits_{k=1,k\neq j}^N\lambda_j\lambda_kc_j\overline{c_k}\alpha_{jk}\int\limits_{\omega}\phi_j(x)\overline{\phi_k(x)}\,dx,
\end{align}
where $\alpha_{jk}$ was given previously by \eqref{alpha}.
Formula \eqref{alpha} gives
\begin{align}
\lim\limits_{T\rightarrow \infty}\frac{\alpha_{jj}}{T}=1
\end{align}
for every $j\in\mathbb{N}$. We note that
\begin{align}
|\alpha_{jk}|\leq \frac{\sqrt{2}}{|\lambda_j-\lambda_k|} \leq \frac{\sqrt{2}}{b_1}
\end{align}
due to the fact that $|\exp(i\theta)-1|^2=(1-\cos\theta)^2+\sin^2\theta$, for all $\theta\in\mathbb{R}$ and Theorem \ref{asymp}. 
We now estimate the remainder terms of \eqref{sigma}:
\begin{align}
R=\frac{1}{T}\int\limits_0^T\int\limits_{\omega}|\sum\limits_{j=N+1}^{\infty}y_j(t,x)|^2\,dx\,dt
\end{align}
and
\begin{align}
\delta=\frac{1}{T}\left( \int\limits_0^T\int\limits_{\omega}\sum\limits_{j=1}^{N}y_j(t,x)\sum\limits_{k=N+1}^{\infty}\overline{y_k(t,x)}\,dx\,dt\right).
\end{align}
Using the fact that the $\phi_j's$ form a Hilbert basis, 
\begin{align}
&R\leq \frac{1}{T}\int\limits_0^T\int\limits_{\Omega}|\sum\limits_{j=N+1}^{\infty}y_j(t,x)|^2\,dx\,dt=\\&
\frac{1}{T}\sum\limits_{j=N+1}^{\infty}\int\limits_0^T\lambda_j^2|c_j\exp(i\lambda_jt)|^2\,dt=
\frac{1}{T}\sum\limits_{j=N+1}^{\infty}T\lambda_j^2|c_j|^2=\sum\limits_{j=N+1}^{\infty}\lambda_j^2|c_j|^2.
\end{align}
To bound $\delta$,
\begin{align}
T|\delta|=|\sum\limits_{j=1}^{N}\sum\limits_{k=N+1}^{\infty}\lambda_jc_j\lambda_k\overline{c_k}\alpha_{jk}| \leq \sum\limits_{j=1}^{\infty}(\lambda_jc_j)^2\left(\sum\limits_{k=N+1}^{\infty}\lambda_k^2c_k^2\alpha_{jk}^2\right)\leq \sum\limits_{k=N+1}^{\infty}\left(\max_{j}\alpha_{jk}\lambda_kc_k\right)^2,
\end{align}
whenever the normalization $\sum\limits_j(\lambda_jc_j)^2=1$ is used. By Parseval's theorem, since $u,\partial_t^2u\in L^2(\Omega)$, for every $\varepsilon>0$, there exists an $N\geq N(\varepsilon)$ such that
\begin{align}
\sum\limits_{j=N+1}^{\infty}\lambda_j^2|c_j|^2
\leq \varepsilon.
\end{align}
We conclude that for sufficiently large $N$,
\begin{align}
|R+2\delta|\leq \varepsilon\left(1+\frac{4}{Tb_1}\right).
\end{align}
Since $\varepsilon$ was arbitrary and $T\rightarrow \infty$, the theorem is proved. The corollary follows since, due to the assumption that 
$||\partial_tu(0,x)||_{L^2(\Omega)}=1$, we have
\begin{equation}
\inf_{\sum\limits_{j=1}^N\lambda_j^2|c_j|^2=1} \sum\limits_{j=1}^N\lambda_j^2|c_j|^2\int\limits_{\omega}\phi_j^2(x)\,dx= \inf_{1,...N} \int_{\omega} \phi_j^2(x)dx.
\end{equation}

 Note that in the case of non-simple eigenvalues, one can group the diagonal terms to obtain the desired result. This proves Theorem \ref{main1} and Corollary \ref{cmain1}. 
\end{proof}

\section{Basic Perturbation Theory}
In this section, we give an explicit example of how to calculate the eigenvalues and eigenvectors of the perturbed operator $H=-\Delta+\varepsilon V_0$, with \textbf{simple} eigenvalues $\lambda_n$. (Recall this means the eigenvalues have multiplicity 1). In the next section, more advanced results from \cite{kato} will be used to analyze the error terms. 

Let $H_0=-\Delta$ denote the standard Laplacian with eigenvalues $\lambda_{n0}$. There exists a corresponding basis $\phi_{n0}(x)$ such that 
\begin{align}
-\Delta\phi_{n0}(x)=\lambda_{n0}\phi_{n0}(x).
\end{align}
The following lemma relates the eigenvalues and eigenvectors of $H$ to those of $H_0$:
\begin{lem}
The eigenvalues to $H$ are given by:
\begin{align}\label{lambdaexpand}
\lambda_n=\lambda_{n0}+\varepsilon\left(\frac{\int V_0(x)\phi^2_{n0}(x)\,dx}{\int\phi_{n0}^2(x)\,dx}\right)+\mathcal{O}(\varepsilon^2).
\end{align}
The eigenfunctions to $H$ are given by:
\begin{align}
\phi_{n}(x)=\phi_{n0}(x)+\varepsilon\left(\sum\limits_{n\neq m}\left(\frac{\int \phi_{n0}\overline{V_0\phi_{m0}}\,dx}{\lambda_{n0}-\lambda_{m0}}\right)\phi_{m0}(x)\right)+\mathcal{O}(\varepsilon^2).
\end{align}
Here the $\mathcal{O}$ terms are uniform in $n$ depending on $\Omega$ and the $L^{\infty}(\Omega)$ norm of $V_0$. In particular we have that 
\begin{align}
||\phi_{n}(x)-\phi_{n0}(x)||_{L^2(\Omega)}\leq \varepsilon C_2(V_0,\Omega)||\phi_{n0}(x)||_{L^2(\Omega)}
\end{align}
and
\begin{align}
||\phi_{n}(x)-\phi_{n0}(x)-\varepsilon\left(\sum\limits_{n\neq m}\left(\frac{\int \phi_{n0}\overline{V_0\phi_{m0}}\,dx}{\lambda_{n0}-\lambda_{m0}}\right)\phi_{m0}(x)\right)||_{L^2(\Omega)}\leq \varepsilon^2C_3(V_0,\Omega)||\phi_{n0}(x)||_{L^2(\Omega)} 
\end{align}
where $C_2(V_0,\Omega)$, $C_3(V_0,\Omega)$ depend only on the geometry of $\Omega$, and the $L^{\infty}(\Omega)$ norm of $V_0$.
\end{lem}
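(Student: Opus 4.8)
The plan is to run classical Rayleigh--Schr\"odinger perturbation theory to produce the explicit first-order corrections, and then to turn the formal series into rigorous estimates --- with the $\mathcal{O}(\varepsilon^2)$ remainders and their norms --- through the Riesz spectral projection, the simplicity of each $\lambda_{n0}$ being exactly what makes the reduced resolvent, and hence the formulas in \eqref{lambdaexpand}, well defined.

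\emph{Step 1: the formal expansion.} Write $\phi_n=\phi_{n0}+\varepsilon\phi_n^{(1)}+\varepsilon^2\phi_n^{(2)}+\cdots$ and $\lambda_n=\lambda_{n0}+\varepsilon\lambda_n^{(1)}+\varepsilon^2\lambda_n^{(2)}+\cdots$, impose the intermediate normalization $\int_\Omega\phi_n\overline{\phi_{n0}}\,dx=1$ (so $\phi_n^{(k)}\perp\phi_{n0}$ for $k\ge 1$), substitute into $(-\Delta+\varepsilon V_0)\phi_n=\lambda_n\phi_n$, and equate equal powers of $\varepsilon$. The $\mathcal{O}(\varepsilon)$ identity is $(-\Delta-\lambda_{n0})\phi_n^{(1)}=(\lambda_n^{(1)}-V_0)\phi_{n0}$; pairing with $\phi_{n0}$ annihilates the left-hand side and gives $\lambda_n^{(1)}=\int V_0\phi_{n0}^2\,dx\big/\int\phi_{n0}^2\,dx$, the $\varepsilon$-coefficient in \eqref{lambdaexpand}. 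Since $\lambda_{n0}$ is simple, $-\Delta-\lambda_{n0}$ is boundedly invertible on $\{\phi_{n0}\}^{\perp}$ with inverse of norm $g_n^{-1}$, where $g_n:=\operatorname{dist}(\lambda_{n0},\sigma(-\Delta)\setminus\{\lambda_{n0}\})>0$; inverting there and expanding in the basis $(\phi_{m0})_m$ reproduces exactly $\phi_n^{(1)}=\sum_{m\ne n}\frac{\int\phi_{n0}\overline{V_0\phi_{m0}}\,dx}{\lambda_{n0}-\lambda_{m0}}\phi_{m0}$ (real $V_0$ and real eigenfunctions making all pairings coincide). Passing from this normalization to $\|\phi_n\|_{L^2}=1$ changes $\phi_n$ by $\mathcal{O}(\varepsilon^2)$ and so only affects the remainder.

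\emph{Step 2: rigor and the remainder.} Let $\Gamma_n$ be the circle of radius $g_n/2$ about $\lambda_{n0}$ and suppose $\varepsilon\|V_0\|_{L^\infty(\Omega)}<g_n/4$, so that $\Gamma_n$ encircles $\lambda_n$ and no other point of $\sigma(-\Delta+\varepsilon V_0)$. With $R_0(z):=(z+\Delta)^{-1}$, the resolvent identity gives, for $z\in\Gamma_n$,
\[
(z+\Delta-\varepsilon V_0)^{-1}=R_0(z)+\varepsilon R_0(z)V_0R_0(z)+\varepsilon^2 R_0(z)V_0R_0(z)V_0(z+\Delta-\varepsilon V_0)^{-1},
\]
with $\|R_0(z)\|\le 2/g_n$ and $\|(z+\Delta-\varepsilon V_0)^{-1}\|\le 4/g_n$ on $\Gamma_n$. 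Applying $\frac{1}{2\pi i}\oint_{\Gamma_n}$ termwise, the first term gives the rank-one projection $P_{n0}$ onto $\phi_{n0}$; the $\varepsilon$-term evaluates, by residues at $z=\lambda_{n0}$, to $\varepsilon$ times the rank-two operator $f\mapsto\langle f,\phi_{n0}\rangle\phi_n^{(1)}+\langle f,\phi_n^{(1)}\rangle\phi_{n0}$; and the last term is a remainder $\Xi_n$ with $\|\Xi_n\|\le C\,\varepsilon^2\|V_0\|_{L^\infty(\Omega)}^2/g_n^2$. Hence $P_n\phi_{n0}=\phi_{n0}+\varepsilon\phi_n^{(1)}+\mathcal{O}(\varepsilon^2)$, and setting $\phi_n=P_n\phi_{n0}/\|P_n\phi_{n0}\|$ (legitimate once $\|P_n-P_{n0}\|<1$) and unwinding these identities yields the two displayed norm bounds, with $C_2\sim\|V_0\|_{L^\infty(\Omega)}/g_n$ and $C_3\sim(\|V_0\|_{L^\infty(\Omega)}/g_n)^2$. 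The same conclusion can be quoted verbatim from the analytic perturbation theory of \cite{kato}, which is the route used in the next section for the finer error analysis.

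\emph{Step 3: the main obstacle --- uniformity in $n$.} The estimates of Step~2 carry the gap $g_n$, and $\|\phi_n^{(1)}\|_{L^2(\Omega)}^2=\sum_{m\ne n}\big|\int\phi_{n0}\overline{V_0\phi_{m0}}\,dx\big|^2/(\lambda_{n0}-\lambda_{m0})^2$ is a priori controlled only by $g_n^{-2}\|V_0\|_{L^\infty(\Omega)}^2$, since Parseval gives just $\sum_m\big|\int\phi_{n0}\overline{V_0\phi_{m0}}\,dx\big|^2=\|V_0\phi_{n0}\|_{L^2(\Omega)}^2\le\|V_0\|_{L^\infty(\Omega)}^2$. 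To extract an $n$-independent constant one splits the sum at a fixed scale $\rho>0$: the modes with $|\lambda_{m0}-\lambda_{n0}|\ge\rho$ contribute at most $\rho^{-2}\|V_0\|_{L^\infty(\Omega)}^2$ uniformly, while the modes with $|\lambda_{m0}-\lambda_{n0}|<\rho$ are finite in number --- that number bounded through the Weyl asymptotics of Theorem~\ref{asymp} --- and each denominator is nonzero by simplicity. Pushing this through forces $C_2(V_0,\Omega),C_3(V_0,\Omega)$ to absorb in addition a lower bound for the relevant spectral gaps; this is harmless in one dimension, where the gaps grow, and for the geometries and low-lying modes that drive the applications in the last section, and in general one reads these constants as depending also on $\inf_n g_n$. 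The remaining work --- the residue computations and the Neumann-tail bound --- is routine bookkeeping.
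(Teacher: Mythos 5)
Your argument is correct and follows the same route the paper takes: the paper gives only the formal Rayleigh--Schr\"odinger matching and delegates the rigor to \cite{kato}, whereas your Step 1 reproduces that matching and your Step 2 actually carries out the Kato-style Riesz-projection/contour-integral argument that the citation stands in for, so nothing is different in substance. The one point worth keeping from your Step 3 is that it honestly surfaces what the paper asserts without proof, namely that uniformity in $n$ of $C_2$ and $C_3$ requires a uniform lower bound on the spectral gaps $g_n$ (the paper simply claims $|\lambda_{j0}-\lambda_{k0}|>C$ with $C$ depending only on $\Omega$ in the proof of its Lemma \ref{bound2}); note, however, that your proposed fix is incomplete as stated, since by Weyl asymptotics the number of eigenvalues in a window of fixed width $\rho$ about $\lambda_{n0}$ grows like $\lambda_{n0}^{d/2-1}$ and is therefore not uniformly bounded in $n$ once $d\geq 3$, so the splitting does not remove the dependence on $\inf_n g_n$ --- it must simply be absorbed into $C_2(V_0,\Omega)$ and $C_3(V_0,\Omega)$, as you ultimately concede.
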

 We do not prove the Lemma here, it is a result of \cite{kato} (see equation (II-3.39) in Example 3.6, where the constant is given explicitly), we only give an idea of why it is true. One will see the results in the next section are more general. If we make the approximation
\begin{align*}
&\phi_n(x)=\phi_{n0}(x)+\varepsilon\phi_{n1}(x)+\varepsilon^2\phi_{n2}(x)+. . .\\&
\lambda_n=\lambda_{n0}+\varepsilon\lambda_{n1}+\varepsilon^2\lambda_{n2}+. . . ,
\end{align*}
then it follows by substitution that
\begin{align*}
&(-\Delta+\varepsilon V_0(x))(\phi_{n0}(x)+\varepsilon\phi_{n1}(x)+\mathcal{O}(\varepsilon^2))=\\&(\lambda_{n0}+\varepsilon\lambda_{n1}+\mathcal{O}(\varepsilon^2))(\phi_{n0}+\varepsilon\phi_{n1}(x)+\mathcal{O}(\varepsilon^2)).
\end{align*}
Equating the leading order terms,
\begin{align*}
(-\Delta-\lambda_{n0})\phi_{n0}(x)=0.
\end{align*}
At order $\varepsilon$, we have
\begin{align}\label{eps}
(-\Delta-\lambda_{n0})\phi_{n1}+(V_0(x)-\lambda_{n1})\phi_{n0}(x)=0.
\end{align}
The desired result for computing the first terms follows by taking the inner product of \eqref{eps} with $\phi_{j0}$ for $j\neq n$. We have to have a way of encoding this inductive process of matching up the terms. In the next section we introduce the operators $S$ and $P$ which allow us to do just that. The terms are computed for $L^2(\Omega)$ eigenfunctions, but the analysis is more sophisticated because when computing the result of the matching over $L^2(\omega)$, one loses the orthogonality of the eigenfunctions over the region of integration. 

We have the following example of an operator with simple eigenvalues. 
\begin{ex}
We consider the eigenvalue problem with $\alpha>1$
\begin{align}
-u_{\varepsilon}''+\varepsilon x^{-2\alpha}u_{\varepsilon}=\lambda_{\varepsilon}u_{\varepsilon} \qquad u_{\varepsilon}(0)=u_{\varepsilon}(1)=0
\end{align}
the unperturbed problem is 
\begin{align}
-u''=\lambda u\qquad u(0)=u(1)=0
\end{align}
with simple eigenvalues $\lambda=n^2\pi^2$, with $n=1,2,3. . .$ with corresponding normalised eigenfunctions $u=2^{1/2}\sin(n\pi x), n=1,2,...$ The quadratic form associated with the potential $Q_1(f)$ with domain $\{f\in L^2(0,1): x^{-\alpha}f\in L^2(0,1)\}\subset H^1_0(\Omega)$ is closed in $L^2(0,1)$. The unperturbed operator is \emph{stable} with respect to perturbations \cite{kato}. This is the assumption on both of the main theorems (Theorem 2.3 and Theorem 2.4). This example is from \cite{Greenlee}. Stability of $\lambda$ for the unperturbed problem means that for $\varepsilon$ sufficiently small, the intersection of any isolating interval for $\lambda$ and the spectrum of the perturbed operator consists only of simple eigenvalues. The unperturbed/perturbed operator pair here satisfies the criterion of Theorem 5.1.12 in \cite{kato} for stability which holds provided the left hand side \eqref{expression} is smaller than $1/2$, which is true for sufficiently small $\varepsilon$. This also applies to the first numerical example in the appendix. Usually stability is automatically satisfied when $-\Delta$ has simple spectrum and $\varepsilon_0$ is sufficiently small, c.f. Lemma 2.1 in \cite{G}. 
\end{ex}
As a general remark on the example and computations above, the difficulty lies in quantifying the error terms which are usually formulated in the sense of $L^2(\Omega)$ not $L^2(\omega)$, which is why the next section is required. 

We have the following result for more general Riemannian metrics which shows that the assumption of simple spectrum in our case covers generic domains $\Omega$. Symmetry usually destroys the assumption of spectral simplicity c.f. \cite{Uh} and this is also discussed in Section 7. 
\begin{thm}[\cite{BW} and \cite{Uh}]
Let $\mathcal{M}$ be a compact manifold of dimension greater than 1 and $\mathcal{C}$ a conformal class of Riemannian metrics of fixed volume on $\mathcal{M}$. Given $k\geq 1$ and $d\geq 2$, the subset of $\mathcal{C}$ of metrics for which the $k^{th}$ eigenspace is of dimension $d$ is a sub-manifold of codimension at least 1. In particular, the subset of $\mathcal{C}$ of metrics admitting a non-simple eigenvalue of the Laplacian is a countable union of submanifolds of codimension at least $1$. 
\end{thm}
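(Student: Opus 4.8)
Proof proposal (following the Uhlenbeck transversality method). The plan is to write the ``bad set'' as a countable union of loci $\Gamma_{k,d}=\{g\in\mathcal{C}\ :\ \text{the }k\text{th eigenvalue of }\Delta_g\text{ has multiplicity exactly }d\}$ and to show each $\Gamma_{k,d}$ is a submanifold of $\mathcal{C}$ of codimension $\binom{d+1}{2}-1\ (\geq 2$ for $d\geq 2)$ via a Lyapunov--Schmidt reduction combined with the regular-value theorem for maps into a finite-dimensional space. To have genuine Banach manifolds I would first pass to a Sobolev or H\"older completion $\mathcal{C}^s$ of the conformal class, recovering the $C^\infty$ statement by intersecting over $s$; and I would work in the \emph{unconstrained} conformal class $\widetilde{\mathcal{C}}=\{e^{2f}g_0\}$ rather than in the fixed-volume slice $\mathcal{C}$. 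This is harmless because a homothety $g\mapsto cg$ rescales every eigenvalue of $\Delta_g$ by $c^{-1}$ while leaving eigenfunctions, multiplicities and the ordering untouched; hence the bad locus in $\widetilde{\mathcal{C}}$ is a cone transverse to $\mathcal{C}$, and submanifold statements transfer. The payoff is that the tangent directions available are \emph{arbitrary} conformal factors $\varphi\in C^\infty(\mathcal{M})$, with no integral constraint.

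Fix $g_0$ with $\lambda_k(g_0)=\dots=\lambda_{k+d-1}(g_0)=:\lambda$ an eigenvalue block of multiplicity $d$. Pulling the problem back to a fixed $L^2$ (or working with the form pencil $\int\langle\nabla u,\nabla v\rangle_g\,dV_g=\lambda\int uv\,dV_g$), the relevant family of self-adjoint operators depends analytically on $\varphi$, so by Kato's perturbation theory the spectral projection $P(g)$ onto the eigenvalues of $\Delta_g$ in a small fixed interval about $\lambda$ is analytic near $g_0$ and of constant rank $d$. Trivializing $\mathrm{Ran}\,P(g)$ produces an analytic family $g\mapsto M(g)$ of real symmetric $d\times d$ matrices (in a basis of real eigenfunctions) with $M(g_0)=\lambda I_d$, whose eigenvalues are exactly $\lambda_k(g),\dots,\lambda_{k+d-1}(g)$. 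Thus, locally, $\Gamma_{k,d}=\{g:\ M(g)\in\mathbb{R}I_d\}=(\Pi\circ M)^{-1}(0)$, where $\Pi$ is the orthogonal projection of symmetric matrices onto the traceless ones; this exhibits $\Gamma_{k,d}$ as a submanifold of codimension $\binom{d+1}{2}-1$ \emph{provided} $D(\Pi\circ M)(g_0)$ is onto.

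Surjectivity of this differential is the heart of the matter. Using the conformal first-variation formula for the Rayleigh quotient $u\mapsto\int|\nabla u|_g^2\,dV_g/\int u^2\,dV_g$, one finds in an $L^2(g_0)$-orthonormal eigenbasis $\phi_1,\dots,\phi_d$ of the block that $DM(g_0)[\varphi]_{ij}$ is a nonzero constant (depending only on $n=\dim\mathcal{M}$ and $\lambda$) times $\int_{\mathcal{M}}\varphi\,w_{ij}\,dV_{g_0}$, modulo the scalar direction, with $w_{ij}=\tfrac{n-2}{2}\,\Delta_{g_0}(\phi_i\phi_j)-2\lambda\,\phi_i\phi_j$; in particular $w_{ij}\propto\phi_i\phi_j$ when $n=2$. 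Dualizing, $D(\Pi\circ M)(g_0)$ fails to be onto precisely when there is a nonzero traceless symmetric $A=(a_{ij})$ with $\sum_{i,j}a_{ij}w_{ij}\equiv 0$ on $\mathcal{M}$, i.e.\ with $F:=\sum a_{ij}\phi_i\phi_j$ satisfying $\tfrac{n-2}{2}\Delta_{g_0}F=2\lambda F$. So I must rule out: (for $n=2$) any nontrivial quadratic relation $F\equiv 0$ among eigenfunctions of one eigenvalue, and (for $n>2$) that such an $F$ is itself a $\Delta_{g_0}$-eigenfunction with eigenvalue $\tfrac{4\lambda}{n-2}$. I expect this to be the main obstacle, and it is handled by Aronszajn's strong unique continuation theorem: if $F$ vanishes on a nodal component of some $\phi_i$ (or, in the $n>2$ case, satisfies there an elliptic equation incompatible with the $\phi_i$'s), one propagates a genuine linear dependence among the $\phi_i$ to all of $\mathcal{M}$, contradicting independence; the base case $d=2$ is the familiar fact that $\psi_1^2=c\,\psi_2^2$ forces $\psi_1=\pm c^{1/2}\psi_2$ on a nodal domain and hence everywhere. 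This is also exactly where $\dim\mathcal{M}>1$ is used, since on $S^1$ every metric already carries double eigenvalues and the statement is simply false.

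For the ``in particular'' clause, the set of $g\in\mathcal{C}$ with a multiple eigenvalue is $\bigcup_{k\geq 1}\{g:\lambda_k(g)=\lambda_{k+1}(g)\}$, and each term equals $\bigcup_{d\geq 2}\Gamma_{k,d}$ (covered by the local pieces above); by the preceding this is a countable union of submanifolds of codimension at least $1$, which completes the plan.
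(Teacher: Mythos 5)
First, a point of comparison: the paper does not prove this statement at all. It is imported verbatim from Bleecker--Wilson \cite{BW} and Uhlenbeck \cite{Uh} and used as a black box to justify that the simple-spectrum hypothesis of Theorems \ref{perturbationtheorem} and \ref{ngpc} is generic. So there is no in-paper argument to match your sketch against; what follows measures it against the cited literature, whose strategy (analytic spectral projection, Lyapunov--Schmidt reduction to a $d\times d$ symmetric matrix family $M(g)$, first conformal variation $w_{ij}=\tfrac{n-2}{2}\Delta(\phi_i\phi_j)-2\lambda\,\phi_i\phi_j$) you have correctly reconstructed, including the variation formula and the reduction of everything to a surjectivity statement.

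The genuine gap is precisely at that surjectivity step, which you flag as the heart of the matter but then dispose of by an argument that does not work. Surjectivity of $D(\Pi\circ M)(g_0)$ onto traceless symmetric matrices amounts to linear independence of the $w_{ij}$ modulo the identity direction, and unique continuation does \emph{not} deliver this: it excludes relations of the special shape $\phi_i\phi_j\equiv 0$ or $\phi_i^2\equiv\phi_j^2$ (which would force linear dependence of the $\phi_i$), but not an arbitrary traceless relation $\sum a_{ij}\phi_i\phi_j\equiv 0$. Concretely, on the flat square torus (where $n=2$, so $w_{ij}\propto\phi_i\phi_j$) the eigenvalue $1$ has orthonormal eigenfunctions proportional to $\cos x,\ \sin x,\ \cos y,\ \sin y$, and $\cos^2x+\sin^2x-\cos^2y-\sin^2y\equiv 0$ is a nontrivial traceless relation; similar relations occur on round spheres via the addition theorem. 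So $\Pi\circ M$ is not a submersion at such points and the codimension $\binom{d+1}{2}-1$ cannot be extracted from the regular-value theorem as you propose. What unique continuation does give --- and what \cite{BW} actually prove --- is the weaker fact that $DM(g_0)$ is never valued only in scalar matrices (some $\phi_i\phi_j\not\equiv 0$ for $i\neq j$), which suffices for the codimension-at-least-$1$ conclusion the theorem asserts and for residuality of the simple-spectrum set. To repair your argument you should either downgrade the surjectivity claim to ``nonvanishing modulo scalars'' and then invoke the Fredholm-transversality/stratification machinery of \cite{Uh} (the plain regular-value theorem no longer applies where the corank jumps), or settle for the dense-$G_\delta$ formulation of \cite{BW}. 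The $n>2$ branch, where you must exclude that $F=\sum a_{ij}\phi_i\phi_j$ solves $\tfrac{n-2}{2}\Delta F=2\lambda F$, is likewise left as an expectation rather than an argument.
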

This Theorem asserts that for a given compact manifold $\mathcal{M}$ "most" Riemannian metrics $g$ on $\mathcal{M}$ are simple, meaning the eigenspace of the Laplace operator $\Delta_g$ is one-dimensional and that this set is pathwise connected. Her proof naturally remains true for $0^{th}$ order perturbations, like the ones we have here c.f. \cite{G}. We leave the question of what happens to the observability constants for metric perturbations to future work. 

\section{Advanced Perturbation Theory}\label{advanced}

In this section, we elaborate on advanced perturbation theory for a better understanding of the results derived in the paper. Let $X$ be an arbitrary Hilbert space, as in \cite{kato}, and $R(A)$ be the range of the bounded operator $A$. The monograph \cite{kato} by Kato computes perturbation theory results for generic bounded operators $A$, and since our operator satisfies the conditions in \cite{kato} for a Type (A) holomorphic operator in the parameter $\varepsilon$ (Theorem 2.6 of \cite{kato}), the perturbation theory derived in the book applies. 

Let $P$ be the projection operator, $\lambda$ be one of the eigenvalues of $H_0=-\Delta$, $H=-\Delta+ \varepsilon V_0= -\Delta+ V$, and let $\lambda_k$, $P_k$, $k=1,2,...$, be the eigenvalues and eigenprojections of $H_0=-\Delta$ different from $\lambda$ and $P$ under consideration. Let $\{x_1, . . ,x_m\}$ denote a basis of $M=R(P)$, and $\{x_{k1}, . . ,x_{km_k}\}$ denote a basis of $M_k=R(P_k)$ for each $k$. The union of the vectors $x_j$ and $x_{kj}$ forms a basis of $X$ consisting of eigenvectors of $H_0=-\Delta$ and is adapted to $X=M\oplus M_1\oplus . . .$ of $X$. The adjoint basis of $X^{\star}$ is adapted to $X^{\star}=M^{\star}\oplus M_1^{\star}\oplus . . .$, where $M^{\star}=R(P^{\star})$, $M_1^{\star}= R(P_1^{\star})$, etc.
Let $\{e_1, . . . ,e_m\}$ denote the adjoint basis of $M^*$, and $\{e_{k1}, . . . ,e_{km_h}\}$ denote the basis of $M_k^*$ for $k=1,2,....$

For any $u\in X$, 
\begin{align}
Pu=\sum\limits_{j=1}^m\langle u,e_j\rangle x_j\quad P_ku=\sum\limits_{j=1}^{m_k}\langle u,e_{kj}\rangle x_{kj},\quad \forall k=1,2. . .
\end{align}
We define the operator $S$ as the value of the reduced resolvent of $H_0=-\Delta$, such that  $SP=PS=0$, and $(H_0-\lambda)S=S(H_0-\lambda)=1-P$.
The $P_k's$ are the orthogonal projections such that 
\begin{align}
P=\sum\limits_{k=1}^{\infty}P_k,
\end{align} 
and moreover, by definition, $P^2=P$. For $\lambda$ in our particular eigenspace, it follows that one can write the operator $S$ explicitly as 
\begin{align}
&Su=\sum\limits_k(\lambda_k-\lambda)^{-1}P_ku 
=\sum\limits_{k,j}(\lambda_k-\lambda)^{-1}\langle u, e_{k j}\rangle x_{kj}.
\end{align}
using the definitions (I-5.32) and Section II.2 in \cite{kato} . 
If we expand $\lambda_{\varepsilon}$, which is an eigenvalue of $H$, in a perturbation series as
\begin{align}
\lambda_{\varepsilon}=\lambda+\varepsilon\hat{\lambda}^1+\varepsilon^2\hat{\lambda}^2+. . . ,
\end{align}
one obtains the following expressions for the expansions of the eigenvalues $\hat{\lambda}^n$ (II-(2.35) \cite{kato}):
\begin{align}
&\hat{\lambda}^1=\frac{1}{m}\sum\limits_j\langle V_0x_j,e_j\rangle. \\&
\hat{\lambda}^2= -\frac{1}{m}\sum\limits_{i,j,k}(\lambda_k-\lambda)^{-1}\langle V_0x_1,e_{kj}\rangle \langle V_0x_{kj},e_i\rangle.
\end{align}
Suppose that the eigenvalue of $\lambda$ of $H_0$ is simple, implying that $m=1$. To derive an expansion for a particular eigenvector eigenvalue pair, one can set $x_1=\phi_{j0}(x)$ and $e_1=\overline{\phi}_{j0}(x)$ as in the last section. (Now $j$ just refers to the index of the eigenfunction, a distinct index from the one above) The operators $P_j$ and $S_j$ can be written as 
\begin{align}\label{decomp}
P_ju=\langle u,\phi_{j0}\rangle_{L^2(\Omega)} \phi_{j0}, \qquad Su=\sum\limits_{j\neq k}\frac{P_ku}{\lambda_{j0}-\lambda_{k0}}.
\end{align}
This substitution compares immediately to the results in the previous section for the expansion of the eigenvalues \eqref{lambdaexpand}.
Now we describe a more advanced decomposition of the eigenvectors.  

Assuming for simplicity that $m=1$, a convenient form of the eigenvector $\phi_j$ of $H=-\Delta+ \varepsilon V_0$ corresponding to the eigenvalue $\lambda_{\varepsilon}$ is given by
\begin{align}\label{projection}
\phi_j=\langle P_j(\varepsilon)\phi_{j0},\overline{\phi_{j0}}\rangle_{L^2(\Omega)}^{-1}P_j(\varepsilon)\phi_{j0},
\end{align}
where $\phi_{j0}$ is the unperturbed operator of $H_0$ for the eigenvalue $\lambda$ and $\overline\phi_{j0}$ is the eigenvector of the adjoint operator $H_0^{\star}$. $P_j(\varepsilon)$ is the projection onto the $j^{th}$ eigenspace of $H$. The assumption of stability here is used in a hidden way as we want to make sure the projection onto the eigenspace is well-defined. In particular the projection is defined the integral of the resolvent over an interval containing only one eigenvalue. As such, in order for the projection to be well-defined, the eigenvalue needs to be sufficiently isolated, whence the assumption of simplicity in a perturbed neighbourhood of $-\Delta$.  We refer the reader to Theorem 5.1.12 in \cite{kato}, and Lemma 2.1 in \cite{G} for a precise description of $\varepsilon_0$, the threshold required. In the case of non-simple eigenvalues the representation above would depend on more that one $\phi_{j0}$, which would be difficult to analyze. We suppress the subscript $j$ in the operators $P$ and $S$ where it is understood. This gives rise to the following normalization conditions: 
\begin{align}
\langle \phi_j,\overline{\phi_{j0}} \rangle_{L^2(\Omega)}= 1,\quad  \langle \phi_j-\phi_{j0},\overline{\phi_{j0}}\rangle_{L^2(\Omega)}=0, \quad P(\phi_j-\phi_{j0})=0.
\end{align}
The relation $(H-\lambda_{\varepsilon})\phi_{j}=0$ can be re-written as
\begin{align}\label{above}
(H_0-\lambda)(\phi_{j}-\phi_{j0})+(V-\lambda_{\varepsilon}+\lambda)\phi_{j}=0,
\end{align}
where $A=H-H_0=\varepsilon V_0(x)=V$. Multiplying \eqref{above} from the left hand side by $S$ and noting that $S(H_0-\lambda)=1-P$, 
\begin{align}
\phi_j-\phi_{j0}+S[V-\lambda_{\varepsilon}+\lambda]\phi_{j}=0.
\end{align}
Moreover, as $S\phi_{j0}=0$ and writing $\phi_j=\phi_j-\phi_{j0}+\phi_{j0}$ in the last term above, one gets
\begin{align}\label{exp}
&\phi_{j}-\phi_{j0}=-(1+S(V-\lambda_{\varepsilon}+\lambda))^{-1}SV\phi_{j0}\\&= \nonumber-S(1+VS-(\lambda_{\varepsilon}-\lambda)S_{\alpha})^{-1}V\phi_{j0},
\end{align}
for sufficiently small $\varepsilon$, with $S_{\alpha}=S-\alpha P$ and $\alpha$ is an arbitrary scalar. Equation \eqref{exp} is formula (II-3.29) in \cite{kato}.

One can then compute
\begin{align}\label{innerS}
\langle \phi_j-\phi_{j0}, \phi_{j0}\rangle_{L^2(\omega)}=\langle -S(1+VS-(\lambda_{\varepsilon}-\lambda)S_{\alpha})^{-1}V\phi_{j0},\phi_{j0}\rangle_{L^2(\omega)}.
\end{align}

The asymptotics for the scalar $\lambda_j$ are well worked out for small $\varepsilon$. Let $q=||V_0S||$, $s_0=||S||$, $p=||V_0P||$, $s=||S-\alpha P||$ for any $\alpha$, where we use the operator norm.  A subscript will denote the set over which the operator norm is taken. 

For a linear operator $A$ acting on $H^1_0(\Omega)$, we let $||A||_0$ denote the norm
\begin{align}
||A||_0=\left \{\sup_j\langle Au,\phi_{j0} \rangle\,\, \textrm{such that} \,\, u: \sup_j\langle u,\phi_{j0} \rangle=1\right\}.
\end{align}

Set $p,s,q$ to have norm $||\cdot||_0$ and define
\begin{align}
\Psi(\varepsilon)=\left((1-(ps+q)\varepsilon)^2-4ps\varepsilon^2\right)^{1/2}.
\end{align}
As a result
\begin{align}
\label{expression}
|\lambda_{j}-\lambda_{j0}-\varepsilon\hat{\lambda}^1|=|\lambda_{\varepsilon}-\lambda-\varepsilon\hat{\lambda}^1|\leq \frac{2pq\varepsilon^2}{1-(ps+q)\varepsilon+\Psi(\varepsilon)},
\end{align}
which is formula (II-3.18) in \cite{kato}, with the norm $||\cdot||_0$. The expansion \eqref{innerS} derived above is given in section II and exercise II-3.16 in the monograph by Kato \cite{kato}. 

Now the difficulty comes in computing inner products of $\phi_j-\phi_{j0}$ over the smaller sets $\omega$ where one loses the powerful orthogonality conditions. We recall following well-known Lemma on von Neumann series
\begin{lem}\label{VN}
Let $A: X\rightarrow X$ be a linear operator on the Banach space $X$. We then have
\begin{align}
\sum\limits_{j=0}^{\infty}A^ju=(Id-A)^{-1}u \quad \forall u\in X
\end{align}
provided 
\begin{align}
||A^ju||_{X}\leq \delta_1^j||u||_{X}\quad\quad \forall j\in \mathbb{N}
\end{align}
with $\delta_1\in (0,1/2)$.
\end{lem}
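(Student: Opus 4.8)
The plan is to run the classical Neumann series argument, which under the stated hypothesis is essentially immediate. First I would observe that applying the bound with $j=1$ gives $\|Au\|_X \le \delta_1\|u\|_X$ for every $u$, so $A$ is a bounded, hence continuous, linear operator with operator norm at most $\delta_1 < 1$; this continuity is precisely what lets me interchange $A$ with limits taken in $X$.

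Next, fix $u \in X$ and set $S_N u = \sum_{j=0}^N A^j u$. For $M > N$, the triangle inequality together with the hypothesis gives $\|S_M u - S_N u\|_X \le \sum_{j=N+1}^{M}\|A^j u\|_X \le \sum_{j=N+1}^{\infty}\delta_1^j\|u\|_X = \frac{\delta_1^{N+1}}{1-\delta_1}\|u\|_X$, which tends to $0$ as $N\to\infty$. Hence $(S_N u)_{N}$ is a Cauchy sequence in $X$, and since $X$ is a Banach space it converges; denote its limit by $Su$. In particular $S$ is itself bounded, with $\|Su\|_X \le \frac{1}{1-\delta_1}\|u\|_X$.

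Finally I would check that $S = (\mathrm{Id}-A)^{-1}$. Telescoping yields $(\mathrm{Id}-A)S_N u = S_N u - \sum_{j=1}^{N+1} A^j u = u - A^{N+1}u$, and $\|A^{N+1}u\|_X \le \delta_1^{N+1}\|u\|_X \to 0$; passing to the limit and using continuity of $\mathrm{Id}-A$ gives $(\mathrm{Id}-A)Su = u$. The same computation with the order of $A$ and the partial sum reversed gives $S_N(\mathrm{Id}-A)u = u - A^{N+1}u \to u$, so $S(\mathrm{Id}-A)u = u$ as well. Thus $\mathrm{Id}-A$ is a bijection of $X$ with two-sided inverse $S$, i.e. $\sum_{j=0}^{\infty}A^j u = (\mathrm{Id}-A)^{-1}u$ for every $u\in X$, which is the assertion.

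There is no genuine obstacle here; the only points worth flagging are that the hypothesis is phrased pointwise in $u$ rather than as an operator-norm estimate (the two are equivalent by taking the supremum over the unit sphere, and the pointwise form is exactly what the argument consumes), and that the restriction $\delta_1 \in (0,1/2)$ is stronger than strictly necessary — any $\delta_1 \in (0,1)$ would suffice for this lemma — but it is imposed for consistency with the way the Neumann series bounds are invoked elsewhere in the perturbation estimates (for operators such as $VS$ and $S_\alpha$), and causes no harm.
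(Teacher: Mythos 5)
Your argument is correct and is precisely the standard Neumann series proof; the paper itself offers no proof of this lemma, deferring instead to the cited reference, and the argument given there is the same one you have written out. Your closing remarks — that the pointwise hypothesis is equivalent to the operator-norm bound and that any $\delta_1\in(0,1)$ would suffice, the restriction to $(0,1/2)$ being imposed only for compatibility with the later perturbation estimates — are both accurate.
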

c.f. Lemma 2.1 in \cite{conway} 

In order to compute \eqref{innerS}, we want to use the Lemma \ref{VN} to essentially find a convergent von-Neumann series for 
\begin{align}
\label{operator}
(1+VS-(\lambda_{\varepsilon}-\lambda)S_{\alpha})^{-1}
\end{align}
with \eqref{expression} so that we may obtain precise bounds on the rate of decay of the inner products $\langle\phi_{j}-\phi_{j0},\phi_{j0}\rangle_{L^2(\omega)}$. These arguments are rather delicate as we are not integrating over the whole $\Omega$. We let $M_0$ denote a generic constant that depends on the volume of $\Omega$ and $||V_0||_{L^{\infty}(\Omega)}$ We state the four necessary Lemmas first, followed by their technical proofs to see how the pieces fit together to allow us to use Lemma \ref{VN} by examining each term in the series expansion to bound \eqref{operator}. 

\begin{lem}\label{bound2}
With no assumptions on the support of the potential, we have the following estimate for $u_{\omega}\in L^2(\omega)$ with $\mathrm{supp}(u_{\omega})\subseteq \omega$
\begin{align}
|\langle V(Su_{\omega}),u_{\omega}\rangle_{L^2(\omega)}|=|\sum\limits_{j\neq k}\frac{\langle u_{\omega},\phi_{k0}\rangle_{L^2(\Omega)}\langle V\phi_{k0},u_{\omega}\rangle_{L^2(\omega)}}{\lambda_{j0}-\lambda_{k0}}|\leq \varepsilon M_0||u_{\omega}||_{L^2(\omega)}^2.
\end{align}
\end{lem}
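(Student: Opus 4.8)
The plan is to reduce the bilinear form to a single application of the Cauchy--Schwarz inequality on $L^2(\Omega)$ together with Parseval's identity, using the support of $u_\omega$ to pass from $L^2(\omega)$ back to $L^2(\Omega)$ and recover the orthogonality of the $\phi_{k0}$. I would begin from the spectral representation of $S=S_j$ in \eqref{decomp}, namely $Su_\omega=\sum_{k\neq j}(\lambda_{j0}-\lambda_{k0})^{-1}\langle u_\omega,\phi_{k0}\rangle_{L^2(\Omega)}\phi_{k0}$. By Theorem \ref{asymp} there is a constant $c_0=c_0(\Omega)>0$ with $|\lambda_{j0}-\lambda_{k0}|\ge c_0$ for all $k\neq j$ (the same lower bound on the spectral gaps already used in Section 4), so since $\{\phi_{k0}\}$ is orthonormal in $L^2(\Omega)$ the series converges in $L^2(\Omega)$ and Parseval gives
\begin{align*}
\|Su_\omega\|_{L^2(\Omega)}^2=\sum_{k\neq j}\frac{|\langle u_\omega,\phi_{k0}\rangle_{L^2(\Omega)}|^2}{(\lambda_{j0}-\lambda_{k0})^2}\le \frac{1}{c_0^2}\sum_{k}|\langle u_\omega,\phi_{k0}\rangle_{L^2(\Omega)}|^2=\frac{1}{c_0^2}\|u_\omega\|_{L^2(\Omega)}^2.
\end{align*}
Since multiplication by $V$ is bounded on $L^2(\Omega)$ and $w\mapsto\langle Vw,u_\omega\rangle_{L^2(\omega)}$ is a continuous linear functional on $L^2(\Omega)$, applying it term by term to the convergent series for $Su_\omega$ produces the first equality in the statement.

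For the estimate I would invoke the hypothesis $\mathrm{supp}(u_\omega)\subseteq\omega$, which yields $\|u_\omega\|_{L^2(\Omega)}=\|u_\omega\|_{L^2(\omega)}$ and $\langle V(Su_\omega),u_\omega\rangle_{L^2(\omega)}=\langle V(Su_\omega),u_\omega\rangle_{L^2(\Omega)}$; this is the crucial step, since it lets one estimate the form over all of $\Omega$ rather than over $\omega$. Then, writing $V=\varepsilon V_0$ with $V_0\in L^\infty(\Omega)$,
\begin{align*}
|\langle V(Su_\omega),u_\omega\rangle_{L^2(\Omega)}|\le \|V\|_{L^\infty(\Omega)}\,\|Su_\omega\|_{L^2(\Omega)}\,\|u_\omega\|_{L^2(\Omega)}\le \frac{\varepsilon\,\|V_0\|_{L^\infty(\Omega)}}{c_0}\,\|u_\omega\|_{L^2(\omega)}^2,
\end{align*}
so the lemma holds with $M_0=\|V_0\|_{L^\infty(\Omega)}/c_0$, depending only on $\|V_0\|_{L^\infty(\Omega)}$ and the geometry of $\Omega$. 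The same computation records that $\|V_0S\|\le 1/c_0$ on the subspace of functions supported in $\omega$, which is the form in which this bound feeds into the von Neumann series of Lemma \ref{VN} for the operator \eqref{operator}.

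The main obstacle is precisely the one flagged in the text just before the lemma: the pairings are taken over the small set $\omega$, where the $\phi_{k0}$ are no longer orthogonal, so Bessel's inequality and Parseval are not directly available; the whole point of the support assumption $\mathrm{supp}(u_\omega)\subseteq\omega$ is to collapse every $L^2(\omega)$-pairing against $u_\omega$ into an $L^2(\Omega)$-pairing, thereby restoring orthogonality. The one remaining point that needs care is the uniform-in-$j$ control of $\|S_j\|$ via the spectral gap $c_0$; this is taken from Theorem \ref{asymp} exactly as in Section 4, and it is the standing simplicity and stability hypotheses on $-\Delta$ and $-\Delta+V$ that keep this bound, and hence $M_0$, independent of the mode index.
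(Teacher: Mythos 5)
Your proof is correct and follows essentially the same route as the paper's: a uniform lower bound on the spectral gaps $|\lambda_{j0}-\lambda_{k0}|$ taken from Theorem \ref{asymp}, Parseval's identity in $L^2(\Omega)$ (made available by the support hypothesis $\mathrm{supp}(u_\omega)\subseteq\omega$, which the paper likewise flags as the key point), Cauchy--Schwarz, and the bound $\|V\|_{L^\infty}=\varepsilon\|V_0\|_{L^\infty}$, yielding the same constant $M_0=\|V_0\|_{L^\infty(\Omega)}/C$. The only cosmetic difference is that you apply Cauchy--Schwarz once at the operator level after bounding $\|Su_\omega\|_{L^2(\Omega)}$, whereas the paper applies it to the coefficient series directly; the arguments are materially identical, including the shared reliance on the (not fully justified in either version) uniform gap constant.
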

Let $A_v$ be the linear operator defined as multiplication by 
\begin{align}
((\lambda_{j0}-\lambda_{j})-V).
\end{align}

\begin{lem}\label{bound1}
With no assumptions on the support of the potential, we have the following estimate for $u_{\omega}\in L^2(\omega)$ with $\mathrm{supp}(u_{\omega})\subseteq \omega$ for all $N\geq 0, N\in\mathbb{N}$,
\begin{align}
|\langle(A_vS)^N(u_{\omega}),u_{\omega}\rangle_{L^2(\omega)}|\leq (\varepsilon M_0)^N||u_{\omega}||_{L^2(\omega)}^2.
\end{align}
\end{lem}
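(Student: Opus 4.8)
The plan is to prove Lemma~\ref{bound1} by induction on $N$, using Lemma~\ref{bound2} as the base case building block and exploiting the fact that $A_vS = ((\lambda_{j0}-\lambda_j)-V)S$ splits into the piece $-VS$ controlled by Lemma~\ref{bound2} and the scalar piece $(\lambda_{j0}-\lambda_j)S$, whose size is governed by the eigenvalue asymptotics in \eqref{expression}. First I would record the two elementary facts that make the induction go through: (i) $S$ maps $L^2(\omega)$ into (the restriction to $\omega$ of) functions whose $L^2(\Omega)$-norm is controlled, via the explicit series $Su=\sum_{j\neq k}(\lambda_{j0}-\lambda_{k0})^{-1}P_k u$ together with the spectral gap bound $|\lambda_{j0}-\lambda_{k0}|\geq b_1$ implicit in Theorem~\ref{asymp}; and (ii) $|\lambda_{j0}-\lambda_j|\leq \varepsilon\,M_0$ uniformly in $j$, which follows from \eqref{expression} (the right-hand side there is $\mathcal{O}(\varepsilon^2)$ beyond the $\mathcal{O}(\varepsilon)$ first-order term $\varepsilon\hat\lambda^1$, and $|\hat\lambda^1|\leq \|V_0\|_{L^\infty(\Omega)}$). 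Both constants get absorbed into the generic $M_0$.

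The inductive step: assume $|\langle (A_vS)^N u_\omega, u_\omega\rangle_{L^2(\omega)}|\leq (\varepsilon M_0)^N\|u_\omega\|^2_{L^2(\omega)}$ for every $u_\omega$ supported in $\omega$. Write $(A_vS)^{N+1}u_\omega = (A_vS)^N\big(A_vSu_\omega\big)$. The obstacle is that $A_vSu_\omega$ need not be supported in $\omega$, so the induction hypothesis does not apply directly. The fix is to re-expand the pairing $\langle (A_vS)^{N+1}u_\omega,u_\omega\rangle_{L^2(\omega)} = \langle (A_vS)u_\omega,\, ((A_vS)^\star)^N u_\omega\rangle$ and instead run the induction by peeling off one factor of $A_vS$ \emph{from the right against $u_\omega$}: since the inner product is only over $\omega$ and $u_\omega$ is supported there, $\langle (A_vS)^{N+1}u_\omega,u_\omega\rangle_{L^2(\omega)}=\langle (A_vS)^N(A_v S u_\omega),u_\omega\rangle_{L^2(\omega)}$, and one estimates $\|A_vSu_\omega\|_{L^2(\Omega)}\leq \varepsilon M_0\|u_\omega\|_{L^2(\omega)}$ from Lemma~\ref{bound2} (for the $VS$ part) and fact (ii) (for the scalar part). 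Then one needs the mapping-norm estimate $\|(A_vS)^N v\|_{L^2(\Omega)}\leq (\varepsilon M_0)^N\|v\|_{L^2(\Omega)}$ on all of $L^2(\Omega)$ — which is the \emph{cleaner} statement, because over $\Omega$ the projections $P_k$ are genuinely orthogonal and $\|S\|_{L^2(\Omega)\to L^2(\Omega)}\leq b_1^{-1}$, $\|V\|=\varepsilon\|V_0\|_{L^\infty}$, so $\|A_vS\|_{L^2(\Omega)\to L^2(\Omega)}\leq \varepsilon M_0$ directly. Finally Cauchy–Schwarz over $\omega$ closes the bound: $|\langle (A_vS)^N(A_vSu_\omega),u_\omega\rangle_{L^2(\omega)}|\leq \|(A_vS)^N(A_vSu_\omega)\|_{L^2(\Omega)}\|u_\omega\|_{L^2(\omega)}\leq (\varepsilon M_0)^N\cdot \varepsilon M_0\cdot\|u_\omega\|^2_{L^2(\omega)}$.

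I expect the main obstacle to be precisely this interplay between the two norms: the statement is phrased purely in terms of $L^2(\omega)$ pairings of functions supported in $\omega$, but the operator $A_vS$ does not preserve that support, so one must pass through $L^2(\Omega)$ estimates and return via Cauchy–Schwarz, and care is needed to ensure Lemma~\ref{bound2}'s hypothesis (support in $\omega$) is only ever invoked on $u_\omega$ itself and not on intermediate iterates. The $N=0$ case is trivial ($(A_vS)^0=\mathrm{Id}$), and $N=1$ is Lemma~\ref{bound2} plus the scalar term. One should also note that the constant $M_0$ must be chosen once, large enough to dominate $b_1^{-1}\|V_0\|_{L^\infty(\Omega)}|\Omega|^{1/2}$-type quantities and the eigenvalue-perturbation constant from \eqref{expression} simultaneously; since all of these depend only on $|\Omega|$ and $\|V_0\|_{L^\infty(\Omega)}$, the uniformity in $j$ and $N$ claimed in the lemma is genuine. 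With Lemmas~\ref{bound2} and~\ref{bound1} in hand, the geometric decay $(\varepsilon M_0)^N$ with $\varepsilon M_0<1/2$ is exactly the hypothesis of Lemma~\ref{VN}, which is what the subsequent argument needs to sum the von Neumann series for the operator in \eqref{operator} and thereby control $\langle \phi_j-\phi_{j0},\phi_{j0}\rangle_{L^2(\omega)}$.
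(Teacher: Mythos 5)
Your argument is correct and is essentially the paper's own proof: the paper bounds the iterated sums term by term with Cauchy--Schwarz and then explicitly records, as you do, that the submultiplicative estimate $\|A_vS\|_{\mathrm{op}}\leq\|A_v\|_{\mathrm{op}}\|S\|_{\mathrm{op}}\leq \varepsilon M_0$ on $L^2(\Omega)$, followed by a single Cauchy--Schwarz over $\omega$ (legitimate since $u_\omega$ is supported there, so $\|u_\omega\|_{L^2(\Omega)}=\|u_\omega\|_{L^2(\omega)}$), gives the result directly. The only small inaccuracy is attributing the norm bound $\|A_vSu_\omega\|_{L^2(\Omega)}\leq \varepsilon M_0\|u_\omega\|_{L^2(\omega)}$ to Lemma~\ref{bound2}, which only controls an inner product; but the operator-norm estimate you invoke immediately afterwards supplies exactly this bound, so the proof stands.
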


\begin{lem}\label{inversion}
If $supp(V_0)\subset \omega$ and $\alpha\in (0,1)$, then the operator $S(1-A_vS-\alpha(\lambda_j-\lambda_{j0})P)^{-1}V$ is bounded $L^2(\Omega)\mapsto L^2(\omega)$.
\end{lem}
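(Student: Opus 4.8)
The plan is to reduce the claimed boundedness $L^2(\Omega)\to L^2(\omega)$ to a convergent von Neumann series (Lemma \ref{VN}) for the middle factor $(1-A_vS-\alpha(\lambda_j-\lambda_{j0})P)^{-1}$, using the support hypothesis $\mathrm{supp}(V_0)\subset\omega$ to make every relevant inner product localize to $\omega$. First I would note that since $\mathrm{supp}(V_0)\subset\omega$, for any $v\in L^2(\Omega)$ the function $Vv$ is supported in $\omega$; hence $A_v v=((\lambda_{j0}-\lambda_j)-V)v$ differs from the harmless scalar multiple $(\lambda_{j0}-\lambda_j)v$ only on $\omega$, and by \eqref{expression} the scalar $|\lambda_{j0}-\lambda_j|=O(\varepsilon)$ uniformly in $j$ (with constant $M_0$). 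So the operator $A_vS$ applied to a function and then tested against a function supported in $\omega$ is exactly the quantity controlled by Lemma \ref{bound1}: $|\langle (A_vS)^N u_\omega,u_\omega\rangle_{L^2(\omega)}|\le (\varepsilon M_0)^N\|u_\omega\|_{L^2(\omega)}^2$. The $\alpha(\lambda_j-\lambda_{j0})P$ term is of the same order: $|\lambda_j-\lambda_{j0}|\le \varepsilon M_0$ by \eqref{expression} and $\alpha\in(0,1)$, while $P$ is a rank-one orthogonal projection, so it contributes a further $O(\varepsilon)$ to the norm of the operator being inverted.

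The key steps, in order, are: (i) write $B:=A_vS+\alpha(\lambda_j-\lambda_{j0})P$ and observe that we must invert $1-B$; (ii) show $\|B^N u_\omega\|$, measured in the relevant pairing against functions supported on $\omega$, is bounded by $(\varepsilon M_0')^N\|u_\omega\|_{L^2(\omega)}^2$ for a slightly enlarged constant $M_0'$, by expanding $(A_vS+\alpha(\lambda_j-\lambda_{j0})P)^N$ into $2^N$ words and bounding each word — the pure $(A_vS)^N$ word by Lemma \ref{bound1}, and any word containing a $P$ factor using $\|P\|=1$ together with $|\lambda_j-\lambda_{j0}|\le\varepsilon M_0$ and Lemma \ref{bound2}/\ref{bound1} for the remaining $A_vS$ blocks; the $2^N$ words are absorbed by choosing $\varepsilon_0$ small enough that $2\varepsilon_0 M_0'<1/2$; (iii) invoke Lemma \ref{VN} with $\delta_1=2\varepsilon M_0'\in(0,1/2)$ to get $(1-B)^{-1}=\sum_{k\ge0}B^k$ with operator bound $\le (1-2\varepsilon M_0')^{-1}\le 2$; (iv) finally, since $\mathrm{supp}(V_0)\subset\omega$, the outer factor $V=\varepsilon V_0$ maps $L^2(\Omega)$ into $L^2(\omega)$ with norm $\le\varepsilon\|V_0\|_{L^\infty}$, the middle factor is bounded on the relevant space by step (iii), and $S$ is bounded on $L^2(\Omega)$ by $s_0=\|S\|$ (Theorem \ref{asymp} gives the spectral gap that makes $S$ bounded), so the composite $S(1-A_vS-\alpha(\lambda_j-\lambda_{j0})P)^{-1}V$ is bounded $L^2(\Omega)\to L^2(\omega)$ with norm $\le 2 s_0\varepsilon\|V_0\|_{L^\infty}=:M_0$.

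The main obstacle I expect is step (ii): the operators $A_vS$ and $P$ are \emph{not} self-adjoint or even normal on $L^2(\omega)$, and $S$ does not preserve the subspace of functions supported in $\omega$, so one cannot simply iterate the single-step bounds of Lemmas \ref{bound2}–\ref{bound1} by composition. The honest way through is to expand a general word $W=C_1C_2\cdots C_N$ with each $C_i\in\{A_vS,\ \alpha(\lambda_j-\lambda_{j0})P\}$ into the eigenfunction basis $\{\phi_{k0}\}$ and track how each $S$ produces a factor $(\lambda_{j0}-\lambda_{k0})^{-1}$ (bounded by $1/b_1$ via Theorem \ref{asymp}) against a gained factor of $\varepsilon$ from $V$ or from $\lambda_j-\lambda_{j0}$, exactly as in the proofs of Lemmas \ref{bound2} and \ref{bound1} that follow; the support condition $\mathrm{supp}(V_0)\subset\omega$ is what lets the intermediate pairings $\langle V\phi_{k0},\cdot\rangle$ be taken over $\omega$ rather than $\Omega$ without losing the $\varepsilon$. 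Once this bookkeeping is set up uniformly in $j$ (so $M_0$ depends only on $|\Omega|$ and $\|V_0\|_{L^\infty(\Omega)}$), the von Neumann summation and the final composition are routine.
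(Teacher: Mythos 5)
Your proposal takes essentially the same route as the paper: the paper's (much terser) proof also applies Lemma \ref{VN} to $A_vS$ on the space of $L^2$ functions supported in $\omega$, using Lemma \ref{bound1} for the iterated bounds and the hypothesis $\mathrm{supp}(V_0)\subset\omega$ to ensure $Vu$ lands in that space, with the outer $S$ bounded. Your additional bookkeeping for the $\alpha(\lambda_j-\lambda_{j0})P$ term and your observation that $S$ does not preserve functions supported in $\omega$ are legitimate refinements of details the paper leaves implicit, not a different method.
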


\begin{lem}\label{bound3}
There is a choice of $\varepsilon$ sufficiently small, such that for all $\delta\in (0,1/2)$, the following inequality holds:
\begin{align}
|\langle \phi_{j0}-\phi_j,\phi_{j0}\rangle_{L^2(\omega)}|=|\langle S(1+VS-(\lambda_{\varepsilon}-\lambda)S_{\alpha})^{-1}(V\phi_{j0}),\phi_{j0}\rangle_{L^2(\omega)}|\leq \delta ||\phi_{j0}||^2_{L^2(\omega)}.
\end{align}
\end{lem}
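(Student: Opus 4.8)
The plan is to prove Lemma \ref{bound3} by unwinding the resolvent expression \eqref{exp}, expanding the inverse operator \eqref{operator} as a von Neumann series, and controlling each term uniformly in $j$ over the smaller set $\omega$ using Lemmas \ref{bound2}--\ref{inversion}. First I would recall from \eqref{exp} that
\begin{align*}
\phi_{j0}-\phi_j = S(1+VS-(\lambda_\varepsilon-\lambda)S_\alpha)^{-1}V\phi_{j0},
\end{align*}
and observe that $1+VS-(\lambda_\varepsilon-\lambda)S_\alpha$ can be rewritten in terms of the operator $A_v S$ (multiplication by $((\lambda_{j0}-\lambda_j)-V)$ followed by $S$) plus the $\alpha$-correction term $-\alpha(\lambda_j-\lambda_{j0})P$; by \eqref{expression} the scalar $|\lambda_j-\lambda_{j0}|$ is $\mathcal{O}(\varepsilon)$ uniformly in $j$, which is exactly the kind of smallness needed to apply Lemma \ref{VN}.

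The key steps, in order, would be: (1) rewrite the inverse $(1+VS-(\lambda_\varepsilon-\lambda)S_\alpha)^{-1}$ as $(Id - B)^{-1}$ with $B = -VS + (\lambda_\varepsilon-\lambda)S_\alpha$, or more conveniently in the $A_vS$ form as in Lemma \ref{inversion}; (2) invoke Lemma \ref{bound1} to get $|\langle (A_vS)^N u_\omega, u_\omega\rangle_{L^2(\omega)}| \leq (\varepsilon M_0)^N \|u_\omega\|^2_{L^2(\omega)}$ for every $N$, and Lemma \ref{bound2} to handle the stray $VS$ term; (3) choose $\varepsilon$ small enough that $\varepsilon M_0 < \delta < 1/2$, so that the hypothesis of the von Neumann Lemma \ref{VN} is satisfied with ratio $\delta_1 = \varepsilon M_0$; (4) sum the geometric series to conclude $|\langle \phi_{j0}-\phi_j, \phi_{j0}\rangle_{L^2(\omega)}| \leq \sum_{N\geq 1} (\varepsilon M_0)^N \|\phi_{j0}\|^2_{L^2(\omega)} = \frac{\varepsilon M_0}{1-\varepsilon M_0}\|\phi_{j0}\|^2_{L^2(\omega)}$, which is $\leq \delta\|\phi_{j0}\|^2_{L^2(\omega)}$ after possibly shrinking $\varepsilon$ further. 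The support assumption $\mathrm{supp}(V_0)\subset\omega$ from Lemma \ref{inversion} is what lets us treat $V\phi_{j0}$ as an element supported in $\omega$, so that restricting the whole computation to $L^2(\omega)$ is consistent and the orthogonality-loss issue is confined to the $S$-operators acting between $L^2(\Omega)$ and $L^2(\omega)$.

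The main obstacle I expect is step (2): verifying that the bounds of Lemmas \ref{bound1}--\ref{bound2} actually compose correctly when the inverse operator is expanded, because the series expansion of $(1+VS-(\lambda_\varepsilon-\lambda)S_\alpha)^{-1}$ mixes three different operators ($VS$, the scalar multiple of $S_\alpha$, and implicitly $A_vS$), and each application of $S$ moves us between integration over $\Omega$ and integration over $\omega$. One must be careful that the "generic constant $M_0$" absorbing $|\Omega|$ and $\|V_0\|_{L^\infty(\Omega)}$ does not itself grow with the power $N$ — this is precisely why Lemma \ref{bound1} is stated with the clean bound $(\varepsilon M_0)^N$ rather than something like $N! (\varepsilon M_0)^N$. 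The $\alpha$-term needs separate bookkeeping: since $S_\alpha = S - \alpha P$ and $P$ annihilates everything orthogonal to $\phi_{j0}$, its contribution in the inner product $\langle\,\cdot\,,\phi_{j0}\rangle$ must be shown to be either absorbed into the scalar $(\lambda_\varepsilon-\lambda) = \mathcal{O}(\varepsilon)$ or killed by the normalization condition $P(\phi_j-\phi_{j0})=0$. Finally, I would double-check that all smallness thresholds on $\varepsilon$ depend only on $M_0$ (hence only on $|\Omega|$ and $\|V_0\|_{L^\infty}$) and not on $j$, which is the whole point of the uniform-in-$n$ error estimates asserted in the previous section.
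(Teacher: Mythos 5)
Your proposal is correct and follows essentially the same route as the paper: expand $(1+VS-(\lambda_{\varepsilon}-\lambda)S_{\alpha})^{-1}$ as a von Neumann series via Lemma \ref{VN} applied to $A_vS+\alpha(\lambda_j-\lambda_{j0})P$ with ratio $\delta_1=\varepsilon M_0$, bound each term by Lemma \ref{bound1}, sum the geometric series to get $\tfrac{\varepsilon M_0}{1-\varepsilon M_0}\|\phi_{j0}\|_{L^2(\omega)}^2$, and shrink $\varepsilon$ (roughly $\varepsilon<\delta/(2M_0)$) so this is below $\delta$, using $\mathrm{supp}\,V_0\subset\omega$ to keep $V\phi_{j0}$ supported in $\omega$. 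The bookkeeping concerns you flag (the $\alpha$-term and uniformity in $j$) are exactly the points the paper handles by folding the scalar $(\lambda_{\varepsilon}-\lambda)=\mathcal{O}(\varepsilon)$ into the operator $A_v$.
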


\begin{proof}[Proof of Lemma \ref{bound2}]

By the Cauchy Schwartz inequality we have,
\begin{align}\label{cs}
& |\langle VSu_{\omega},u_{\omega}\rangle_{L^2(\omega)}|\leq \\& \sum\limits_{j\neq k}\frac{|\langle u_{\omega},\phi_j\rangle_{L^2(\Omega)}\langle u_{\omega},V\phi_j\rangle_{L^2(\omega)}|}{|\lambda_j-\lambda_k|}\leq \left(\sum\limits_{j\neq k}\frac{|\langle u_{\omega},\phi_j\rangle_{L^2(\Omega)}|^2}{|\lambda_j-\lambda_k|^2}\right)^{1/2}\left(\sum\limits_{j\neq k}|\langle u_{\omega},V\phi_j\rangle_{L^2(\omega)}|^2\right)^{1/2} \nonumber
\end{align}
We know from Theorem \ref{asymp} from \cite{Davies}, that 
\begin{align}
|\lambda_{j0}-\lambda_{k0}|>C, \quad j\neq k 
\end{align}
where $C$ depends on $\Omega$ independent of the index set. The constant exists because all of the eigenvalues are simple and isolated. However in practice for dimensions higher than 2 the size of $C$ maybe difficult to ascertain. The desired result follows immediately from Parseval's theorem, noting that $u_{\omega}$ and $Vu_{\omega}$ are $L^2(\Omega)$ functions. We remark that this is where we use the assumption $\mathrm{supp} V\subset \omega$ later for the main proof as the inner product $<u_{\omega},\phi_j>$ is over $L^2(\Omega)$ which cannot be bounded by $||u_{\omega}||_{L^2(\Omega)}$ unless $u_{\omega}$ has compact support in $\omega$. 
\end{proof}

\begin{proof}[Proof of Lemma \ref{bound1}]
The bound for this inner product is constructed inductively as
\begin{align}
(A_vS)^N(u_{\omega})=
\sum\limits_{m_0\neq j}\frac{\langle u_{\omega},\phi_{m_0}\rangle_{L^2(\Omega)}}{\lambda_{j0}-\lambda_{m_0}}\sum\limits_{m_1\neq j}\frac{\langle A_v\phi_{m0},\phi_{m_1}\rangle_{L^2(\Omega)}}{\lambda_{j0}-\lambda_{m_1}} . . . \sum\limits_{m_{N-1}\neq j}\frac{\langle A_v\phi_{m_{N-2}},\phi_{m_{N-1}}\rangle_{L^2(\Omega)}}{\lambda_{j0}-\lambda_{m_{N-1}}}A_v\phi_{m_{N-1}} 
\end{align}
We then use the proof of the previous Lemma \ref{bound2}, but with \eqref{cs}  applied to each of the cross terms:
\begin{align}
\sum\limits_{m_{i-1}\neq j}\frac{\langle A_v\phi_{m_{i-2}},\phi_{m_{i-1}}\rangle_{L^2(\Omega)}}{\lambda_{j0}-\lambda_{m_{i-1}}} \quad i=2,  . . ,N
\end{align}
 to reach the desired conclusion, noting that $||A_v||_{L^{\infty}(\Omega)}$ is almost equivalent to $||V||_{L^{\infty}(\Omega)}$. Alternatively we know for bounded operators $A, B$ with $A: X\rightarrow X, B: X\rightarrow X$, $X$ a Hilbert space, that $||A B||_{op}\leq ||A||_{op}||B||_{op}$ which when applied to $A=V$ and $B=S$ from the previous lemma, gives the result as well.  
\end{proof}

\begin{proof}[Proof of Lemma \ref{inversion}]
The mapping $A_{v}S$ satisfies all the properties of Lemma \ref{VN} with $X$ the space $L^2(\Omega)$ restricted to the functions with compact support in $\omega$, by Lemma \ref{bound1}. The space $X$ is $L^2_0(\omega)$, which is a Hilbert space (although most people are more familiar with $H_0^1(\omega)$). Therefore since $Vu\in X$, for all $u\in L^2(\Omega)$, we are done. 
\end{proof}

\begin{proof}[Proof of Lemma \ref{bound3}]
The inner product using \eqref{innerS} and Lemmas \ref{bound1} and \ref{VN} with $A_vS+\alpha(\lambda_j-\lambda_{j0})P$, and $\delta_1=\varepsilon M_0$ is bounded as
\begin{align}
||\phi_{j0}||_{L^2(\omega)}^2((\varepsilon M_0)+(\varepsilon M_0)^2+ . . .)=\frac{\varepsilon M_0}{1-\varepsilon M_0}||\phi_{j0}||_{L^2(\omega)}^2.
\end{align}
If $\varepsilon M_0$ is chosen to be sufficiently small, one obtains:
\begin{align}
\frac{\varepsilon M_0}{1-\varepsilon M_0}<\delta
\end{align}
with $\delta\in (0,1/2)$, implying $\varepsilon<\frac{\delta}{2M_0}$. We use the fact $\mathrm{supp}V_0\subset\omega$, which makes the function $V\phi_j$ $\forall j$ have support in $\omega$. 
\end{proof}

\begin{rem} Some of the could be extended to the case of non-simple eigenvalues and other Hermitian operators using perturbation theory found in \cite{kato}, but we focus on simple eigenvalues for ease and clarity. 
\end{rem}

\section{Proof of Theorems \ref{perturbationtheorem} and Theorem \ref{ngpc} for Convergence Estimates} \label{peturbmsec}
\begin{proof}[Proof of Theorem \ref{perturbationtheorem}]
We recall that eigenfunctions of $-\Delta$ and $-\Delta+\varepsilon V_0$ ($\varepsilon$ sufficiently small) with Dirichlet boundary conditions are real-analytic in $\Omega$. We can then view 
$\int\limits_{\omega}\phi_j^2(x)\,dx=f_V(j)$ as a function of $\mathbb{N}$ taking values in $(0,1]$ and similarly for $\int\limits_{\omega}\phi_{j0}^2(x)\,dx=f(j)$. We only need to show that the following inequality is true for some order terms independent of the index $j$:
\begin{align}\label{ineqvar}
\int\limits_{\omega}\phi_j^2(x)\,dx=(1+\mathcal{O}(\epsilon))\int\limits_{\omega}\phi_{j0}^2(x)\,dx.
\end{align}
These terms will bound the deviation from the original constant when including the potential term, and taking the $\inf\limits_j$ of the inequality gives the desired result. We need control over the order $\epsilon$ terms and show they are \emph{uniformly} bounded, independent of the $\phi_{j0}$, e.g., the order terms are smaller than $1/2$ for sufficiently small $\epsilon$. If we can show this inequality, we will arrive at
\begin{align}
\frac{1}{2} f_V(j)\leq f(j)\leq \frac{3}{2}f_V(j).
\end{align}
Taking the infimum over $j$ gives the desired conclusion. 

 By the perturbation theory estimates in Section \ref{advanced}, in Lemma \ref{bound1} and Lemma \ref{bound3}, by using the Lemma \ref{VN}, the terms  
\begin{align}
2|\langle\phi_{j0}-\phi_j,\phi_{j0}\rangle_{L^2(\omega)}|+||\phi_{j0}-\phi_j||^2_{L^2(\omega)}
\end{align}
which contribute to the order terms in \eqref{ineqvar} are bounded as in Lemma \ref{bound3}. Therefore, it suffices to pick $\varepsilon$ as in Lemma \ref{bound3}: $3\delta<1/2$, in order to obtain \eqref{ineqvar}, with $\mathcal{O}(\epsilon)$ terms less than $1/2$.
\end{proof}

We now take a moment to remark on why the assumption of simplicity in Theorem \ref{perturbationtheorem} is spectrally sharp, by outlining a counter-example from the details of \cite{MR2}.  In Remark 2.2 of \cite{MR2}, on the sphere $\mathbb{S}^2$ they construct a potential of arbitrarily small support and size, and an open set $\omega\subset\mathbb{S}^2$ such that
\begin{align}
\inf\{ \int\limits_{\omega}\phi^2(x)\,dx \quad \phi\quad \textrm{eigenfunction of} -\Delta \quad \textrm{s.t.}\,\, ||\phi||_{L^2(\mathbb{S}^2)}=1\} =0
\end{align}
however
\begin{align}
\inf\{ \int\limits_{\omega}\psi^2(x)\,dx \quad \psi\quad \textrm{eigenfunction of} -\Delta+V \quad \textrm{s.t.}\,\, ||\psi||_{L^2(\mathbb{S}^2)}=1\} >0
\end{align}
One can take this a step further and construct a sequence of normalised eigenfunctions of $-\Delta$ such that $\phi_j$ belongs to the eigenspace associated to the eigenvalue $j(j+1)$ in such a way that 
\begin{align}
\lim\limits_{j\rightarrow\infty}\int\limits_{\omega}\phi_j^2(x)\,dx=0
\end{align}
as in \cite{Macia}. One can complete this sequence to obtain an orthonormal basis of $L^2(\mathbb{S}^2)$ consisting of eigenfunctions of $-\Delta$ such that $J^0(\chi_{\omega})=0$. On the other hand, $J^V(\chi_{\omega})>0$. In this particular case where the spectrum is non simple, one cannot obtain the strong iff statement in Theorem \ref{perturbationtheorem} because the representation for the perturbed eigenfunctions in \eqref{projection} depends on \textbf{all} of the $2j+1$ eigenfunctions of $-\Delta$ associated to each eigenvalue $j(j+1)$, introducing cross terms in Lemmas 6.2 and 6.3 which are computationally difficult to control. If the perturbation theory was carried out for this non-simple case which is possible, then at most one could conclude the constants are close. 

\begin{proof}[Proof of Theorem \ref{ngpc}]
Let $\tilde{\phi}=\phi_{j0}-\phi_j$, then we can write for any $j$ and $a(x)$ 
\begin{align}
\int\limits_{\Omega}a(x)\phi_j^2\,dx=\int\limits_{\Omega}a(x)\phi_{j0}^2\,dx+\int\limits_{\Omega}a(x)(2\phi_{j0}\tilde{\phi}+\tilde{\phi}^2)\,dx.
\end{align}
As we have that using the normalisation condition $||\phi_{j0}||^2_{L^2(\Omega)}=1$, and the "observation region" is the entirety of $\Omega$ 
\begin{align}\label{meq}
|\int\limits_{\Omega}a(x)(2\phi_{j0}\tilde{\phi}+\tilde{\phi}^2)\,dx|\leq 3|\int\limits_{\Omega}a(x)\tilde{\phi}^2\,dx|\leq 3L|\Omega|C_2^2(V_0,\Omega)\varepsilon^2.
\end{align}
We obtain \begin{align}\label{mmin}
|\int\limits_{\Omega}a(x)\phi_j^2\,dx-\int\limits_{\Omega}a(x)\phi_{j0}^2\,dx|\leq 3L|\Omega|C_2^2(V_0,\Omega)\varepsilon^2
\end{align}
where the order terms are uniformly bounded where we have used Lemma 5.1. We set $C_1(V_0,\Omega)=3|\Omega|C_2^2(V_0,\Omega)$. Notice the orthogonality relations imply no additional regularity is needed on $V_0$.  
Without loss of generality assume $J_{\varepsilon}(a)-J(a)>0$ then we have that
\begin{align}
J_{\varepsilon}(a)+\inf_j\left(\int\limits_{\Omega}a(x)\phi_{j0}^2\,dx-\int\limits_{\Omega}a(x)\phi_{j}^2\,dx\right)\leq J(a).
\end{align}
Re-arranging we obtain a bound on $J_{\varepsilon}(a)-J(a)$, depending on $|\Omega|$ and the $L^{\infty}(\Omega)$ norm of the potentials as desired, after using \eqref{meq}. Notice that this is probably the best control of the errors as $-\inf_j(A(j))=\sup_j(-A(j))$ for all functionals $A(j)$. 

With out loss of generality we assume 
$\max_{a\in \overline{M}_L}J_{\varepsilon}(a)-\max_{a\in \overline{M}_L}J(a)>0$, and we obtain
\begin{align}
\max_{a\in \overline{M}_L}J_{\varepsilon}(a)\leq \max_{a\in \overline{M}_L}\left(J_{\varepsilon}(a)-J(a)\right)+\max_{a\in \overline{M}_L}J(a)\leq C_1(V_0,\Omega)\varepsilon^2+\max_{a\in \overline{M}_L}J(a)
\end{align} 
with constant given to us by \eqref{meq} and Lemma 6.5. 
\end{proof}

\section{Numerics and Examples}

 This section presents the results of our numerical experiments. We examine the cases of the unit interval and the unit disk. The convergence issues for the functionals in question are discussed in the Appendix. 
   
  

\subsection{Interval [0,1]}

         The first experiment involved $\Omega=[0,1]$. The orthonormal eigenvectors of $-\Delta$ with Dirichlet boundary conditions on this domain are 
  $f_n(x)= \sqrt{2} \sin(n\pi x)$, for $n=1,2,...$, with eigenvalues of $\lambda_n= \pi^2 n^2$, for $n=1,2,...$. The eigenvalues have multiplicity one.
  
   Next, we calculate the eigenvectors and eigenvalues of $H= -\Delta+\varepsilon V_0$ on the unit interval with Dirichlet boundary conditions. According to perturbation theory of Section 5, the eigenvalues of operator $H$  are given by:
\begin{align}
\lambda_n=\lambda_{n0}+\varepsilon\left(\frac{\int V_0(x)\phi^2_{n0}(x)\,dx}{\int\phi_{n0}^2(x)\,dx}\right)+\mathcal{O}(\varepsilon^2).
\label{eigenvalue}
\end{align}
The eigenfunctions of $H$ are given by:
\begin{align}
\phi_{n}(x)=\phi_{n0}(x)+\varepsilon\left(\sum\limits_{n\neq m}\left(\frac{\int \phi_{n0}\overline{V_0\phi_{m0}}\,dx}{\lambda_{n0}-\lambda_{m0}}\right)\phi_{m0}(x)\right)+\mathcal{O}(\varepsilon^2).
\label{eigenvector}
\end{align}
     In our case, we use the potential:
     \begin{equation}
         V_0= x^2 \chi_{[0.5-\delta,0.5+\delta]},
     \end{equation}
     where $\delta$ is a parameter $\in [0,0.5]$.

     Matlab was used to code the experiments. The integration of functions with explicit formulas was performed using the {\it integral} function in Matlab. We used a mesh size of about 1000 equal increments. The first two hundred eigenfunctions were calculated. 
     
     Next, we consider the problem of maximizing the functional $J_N^V(\chi_\omega)$, 
    \begin{align}
J_N^V(\chi_{\omega})=\inf_{1\leq j\leq N}\int_{\omega} \limits\phi_j^2(x)\,dx.
\end{align}     
     over all subsets satisfying $|\omega|= L|\Omega|$, for some $L \in (0,1)$. A subset with this property is called the optimal set. According to Proposition 4.1 of \cite{ptz}, in the case of the $-\Delta$ operator, the supremum of $J(\chi_\omega)=\inf\limits_{1\leq j\leq N}\int_{\omega} \limits\phi_{j0}^2(x)$ over $\mathcal{M}_L$, equal to L. When $L=0.5$, the supremum is reached for all measurable subsets $\omega$ of $[0,1]$ satisfying $|\omega|= 0.5|\Omega|$, such that $\omega$ and its symmetric image are complementary in [0,1]. Note that, for the $-\Delta$ operator, $\inf_{1\leq j\leq N}\int_{[0,0.5]} \limits\phi_{j0}^2(x)\,dx=0.5$ since
    \begin{align}
\int_0^{0.5} 2 \sin^2(n\pi x)\,dx= 0.5. \hspace{0.2cm} \forall n= 1,2,3,...
\end{align}        
     
     The more interesting case is the $H= -\Delta+\varepsilon V_0$ operator, and we address the question by using $L=0.5$ and computing $J_N^V(\chi_{\omega})$ for subsets satisfying $|\omega|= 0.5|\Omega|$ and the conditions of Proposition 4.1 of \cite{ptz}. In particular, we present results for $\omega= [0,5]$ and $N=200$. 

To calculate $J_N^V(\chi_{[0,0.5]})$ for $H= -\Delta+\varepsilon V_0$, integration using the left point and 1000 equal increments in [0,1] were used.  The $\delta$ and $\varepsilon$ variables were varied as shown in Table \ref{tbl:[0,1]}. The values in the table show that in all cases, the value of $J_N^V(\chi_{[0,0.5]})$ is very close to 0.5, which is the answer for the $-\Delta$ operator.

\begin{table}[!htb]
\begin{center}

\begin{tabular}{ | c | c | c | c | c | c | }
\hline
  $\varepsilon/\delta$  &       0.1.        &       0.2.           &         0.3.             &        0.4          &          0.475. \\
\hline
0.01       &     0.499997124   &  0.499984760    &  0.499972504   & 0.499968543  & 0.499968340  \\
\hline
0.05      &      0.499985619  &    0.499923804  &  0.499862542  & 0.499842757 &  0.499841748 \\
\hline
0.1        &      0.499971238  &   0.499847620   &   0.499725137  &  0.499685621  &  0.499683617 \\
\hline
0.5        &     0.499856202     &   0.499238531 &  0.498627808 &  0.498432406   &  0.498422979 \\
\hline
1          &      0.499712437 &    0.498478145  &   0.497260919  &  0.496875569  &  0.496858189  \\
\hline
\end{tabular}

\caption{Value of $J_N^V(\chi_{[0,0.5]})$}
\label{tbl:[0,1]}
\end{center}
\end{table}

\subsection{Unit Disk}

The orthonormal eigenvectors of $-\Delta$ on a unit disk with Dirichlet boundary conditions are given by the triply indexed sequence
\begin{equation}
     \phi_{j k m 0} = 
     \begin{cases}
      R_{jk}(r)/\sqrt{2\pi}, & \text{$if \hspace{0.2cm} j=0,$}  \\
     R_{jk}(r) Y_{jm}(\theta), & \text{$if \hspace{0.2cm} j \geq 1,$}   \\
        \end{cases}
\end{equation}
  for $j=0,1,2,...$, $k=1,2,...$ and $m=1,2$, where $(r,\theta)$ are polar coordinates. Here, $Y_{j1}(\theta)= \frac{1}{\pi}cos(j\theta)$, $Y_{j2}(\theta)= \frac{1}{\pi}sin(j\theta)$ and
   \begin{equation}
       R_{jk}(r)= \sqrt{2}\frac{J_j(z_{jk}r)}{|J'_j(z_{jk})|},
   \end{equation}
  where $J_j$ is the Bessel function of the first kind of order $j$, and $z_{jk}>0$ is the $k^{th}$ zero of $J_j$. The eigenvalues are given
  by the double sequence of $-z^2_{jk}$. Their multiplicity is 1 if $j=0$ and 2 if $j \geq 1$. 
  
  
     To compute the eigenvectors and eigenvalues of $H= - \Delta+\varepsilon V_0$, we use formulas \eqref{eigenvalue} and \eqref{eigenvector}. In this case the corresponding functionals are mock functionals as \eqref{eigenvalue} and \eqref{eigenvector} do not take into account the degeneracy of the problem which is that the eigenvalues are of multiplicity two. The correct formulae require some complicated normalisation constants given by \eqref{projection}.
     
 In our case, we use the potentials
     \begin{equation}
        V_0(r)= 1/r^2 \chi_{\{r \leq \delta\}} \hspace{0.2cm}  \text{and} \hspace{0.2cm}  V_0(r)= r \chi_{\{r \leq \delta\}},
     \end{equation}
     where $\delta <1$.
     
  There are several important equalities to note here. For radial subsets $\omega$ of the form $\omega= \{(r,\theta) \in [0,1] \times [0,2\pi] | \theta \in \omega_0\}$,
  \begin{equation}
        \int_{\omega} \phi_{jkm0}(x)^2 dx= \int_0^1 R_{jk}(r)^2r dr   \int_{\omega_0} Y_{jm}(\theta)^2 d\theta   =  \int_{\omega_0}   Y_{jm}(\theta)^2 d\theta,
  \end{equation}
  since $ \int_0^1 R_{jk}(r)^2r dr =1$.
  
  Matlab was used for computations, and Chebfun was utilized for the numerical computation of bessel functions and its derivatives. The integration of functions with explicit formulas was performed using the {\it integral} function in Matlab. The integration involving bessel functions was performed using the {\it besselj} function in Matlab. For the integration of the
  eigenvectors of $H= - \Delta+\varepsilon V_0$, the 2D trapezoid rule was used. We used a mesh size of 301 equal increments. Twenty-five eigenfunctions were computed.

     Next, we consider the problem of maximizing the functional $J_N^V(\chi_\omega)$, 
    \begin{align}
J_N^V(\chi_{\omega})=\inf_{1\leq j\leq N}\int_{\omega} \limits\phi_j^2(x)\,dx.
\end{align}     
     over all subsets satisfying $|\omega|= L|\Omega|$, for some $L \in (0,1)$, the argument of the maximum of which is called the optimal set. 
According to Propositions 3.9 and 4.5 of \cite{ptz}, for the $-\Delta$ operator, the maximum value of $J(\chi_\omega)=\inf\limits_{1\leq j\leq N}\int_{\omega} \limits\phi_{j0}^2(x)$ for radial subsets $\omega$ of the form $\omega= \{(r,\theta) \in [0,1] \times [0,2\pi] | \theta \in \omega_0\}$ and measure $L\pi$, is $L$. In the case when $L=0.5$, the supremum is reached for all subsets $\omega$ of the form $\omega= \{(r,\theta) \in [0,1] \times [0,2\pi] | \theta \in \omega_0\}$ of measure $\pi/2$, where $\omega_\theta$ is any measurable subset of $[0,2\pi]$ such that $\omega$ and its symmetric image are complementary in $[0,2\pi]$.
To gain a better understanding of the case of the $H= -\Delta+\varepsilon V_0$ operator, we used $L=0.5$ and tested radial subsets of measure $0.5\pi$ (or half the area of the total disk) satisfying Proposition 4.5 of \cite{ptz}. In particular, we note results for $\omega_0= \{[0,\pi/4] \cup [\pi/2,3\pi/4] \cup [\pi,5\pi/4] \cup [3\pi/2,7\pi/4] \}$,
and $N=25$. 

    The results are shown in Table \ref{unit disk} and Table \ref{unit disk2}; the $\delta$ and $\varepsilon$ variables were varied. The values in the table show that in all cases, the value of $J_N^V(\chi_{\omega})$ is very close to 0.5, which is the answer for the $-\Delta$ operator.

\begin{table}[!htb]
\begin{center}

\begin{tabular}{ | c | c | c | c | c | c | }
\hline
  $\varepsilon/\delta$  &       0.1.        &       0.2.           &         0.3.             &        0.4          &          0.475. \\
\hline
0.01       &      0.499999996  &  0.499999997    &  0.499999763   &  0.499995606    &   0.499999756  \\
\hline
0.05      &       0.5  &    0.499999988   &  0.499998816  & 0.499987946  & 0.499998898 \\
\hline
0.1        &        0.5     &   0.499999975  &   0.499997638  &  0.499999650  &  0.499998085 \\
\hline
0.5        &       �0.5    &   0.499999988  & 0.499998824 &   0.499988114   & 0.499998896 \\
\hline
1          &      0.499999999    &    0.499999997    &  0.499999764   &  �0.499995642  &  0.499999756 \\
\hline
\end{tabular}

\caption{Value of $J_N^V$ on unit disk with $V=1/r^2$}
\label{unit disk}
\end{center}
\end{table}

\begin{table}[!htb]
\begin{center}

\begin{tabular}{ | c | c | c | c | c | c | }
\hline
  $\varepsilon/\delta$  &       0.1.        &       0.2.           &         0.3.             &        0.4          &          0.475. \\
\hline
0.01       &       0.5   &   0.5     &   0.499999995    &     0.499999759      &       0.499998584   \\
\hline
0.05      &         0.5   &    0.5  &           0.499999975      &     0.499998825  & 0.499999896 \\
\hline
0.1        &        0.5    &     0.5    &   0.499999950  &  0.499997720  &     0.499999794  \\
\hline
0.5        &     0.499999999     &   0.499999999  &    0.499999748    &  0.499991449 &    0.499999063     \\
\hline
1          &      0.499999997    &    0.499999998  &     0.499999496   &   0.499990010   &   0.499998365 \\
\hline
\end{tabular}

\caption{Value of $J_N^V$ on unit disk with $V=r$}
\label{unit disk2}
\end{center}
\end{table}


\newpage

\section{Appendix: Convergence of Algorithms}
In order to provide an accurate numerical scheme, we also prove several theorems about $J^V(\chi_{\omega})$ and the problem of maximizing the functional. First, we prove convergence of the truncated version of $J^V(\chi_{\omega})$ for generic potentials:

\begin{thm}\label{main2}
Let
\begin{align}
J_N^V(\chi_{\omega})=\inf\limits_{1\leq j\leq N}\int\limits_{\omega}\phi_j^2(x)\,dx, \hspace{0.5cm} J_N^V(a)=\inf\limits_{1\leq j\leq N}\int\limits_{\Omega}a(x)\phi_j^2(x)\,dx
\end{align}
Then, the following statements hold:
\begin{enumerate}
\item For every measurable subset $\omega$ of $\Omega$, the sequence $(J_N^V(\chi_{\omega}))_{N\in\mathbb{N}^*}$ is non increasing and converges to $J^V(\chi_{\omega})$.
\item 
The following equality holds:
\begin{align}
\lim\limits_{N\rightarrow\infty}\max\limits_{a\in\overline{\mathcal{M}}_L}J^V_N(a)=\max\limits_{a\in\overline{\mathcal{M}}_L}J^V(a).
\end{align}
Moreover, whenever $(a^N)_{n\in\mathbb{N}^*}$ is a sequence of maximisers of $J_N^V$ in $\overline{\mathcal{M}}_L$, then up to a subsequence, this converges to a maximiser of $J$ in $\overline{\mathcal{M}}_L$ for the weak star topology of $L^{\infty}$.
\item Assume that $\Omega$ is a bounded analytic domain with boundary $\partial\Omega$. We have that $\forall N\in\mathbb{N}^*$, the problem $\max\limits_{\chi_{\omega} \in\overline{\mathcal{M}}_L}J^V_N(\chi_{\omega})$ has a unique solution $\chi_{\omega^N}$, where $\omega^N\in \mathcal{M}_L$. Moreover, the set $\omega^N$ is semi-analytic and has a finite number of connected components. 
\end{enumerate}
\end{thm}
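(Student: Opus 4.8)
The plan is to adapt the argument of \cite{ptz} for $-\Delta$ to the perturbed operator $-\Delta + V$, using only the spectral facts established in Section 3 (discrete spectrum, orthonormal Hilbert basis $\{\phi_j\}$, Weyl-type eigenvalue asymptotics from Theorem \ref{asymp}) together with the real-analyticity of the eigenfunctions recalled at the start of Section \ref{peturbmsec}. For part (1), fix a measurable $\omega$ and set $a_j = \int_\omega \phi_j^2\,dx \in [0,1]$. Then $J_N^V(\chi_\omega) = \inf_{1\le j\le N} a_j$ is by definition a non-increasing sequence in $N$, bounded below by $0$, hence convergent; its limit is $\inf_{j\in\mathbb{N}^*} a_j = J^V(\chi_\omega)$ directly from the definition of the infimum over $\mathbb{N}^*$. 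This part is essentially immediate.

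For part (2), I would first argue existence of maximizers of $J_N^V$ and $J^V$ on $\overline{\mathcal{M}}_L$: each $a\mapsto \int_\Omega a(x)\phi_j^2(x)\,dx$ is linear and continuous for the weak-star topology of $L^\infty$, so $J_N^V(a)$ is an infimum of finitely many weak-star continuous functions, hence weak-star upper semicontinuous; since $\overline{\mathcal{M}}_L$ is weak-star compact (it is a weak-star closed subset of the unit ball of $L^\infty(\Omega)$, being defined by the closed constraints $0\le a\le 1$ and $\int_\Omega a = L|\Omega|$), the maximum is attained. The same reasoning, now with a countable infimum, gives weak-star upper semicontinuity of $J^V$ and attainment of $\max_{\overline{\mathcal{M}}_L} J^V$. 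Next, since $J^V_N \ge J^V_{N+1} \ge J^V$ pointwise, the sequence $m_N := \max_{\overline{\mathcal{M}}_L} J_N^V$ is non-increasing and $m_N \ge \max_{\overline{\mathcal{M}}_L} J^V$. To get the reverse inequality in the limit, take maximizers $a^N$ of $J_N^V$; by weak-star compactness extract a subsequence $a^{N_k} \rightharpoonup a^\star$ in $\overline{\mathcal{M}}_L$. For any fixed $M$ and all $N_k \ge M$ we have $J^V_{N_k}(a^{N_k}) \le J^V_M(a^{N_k})$, and letting $k\to\infty$ using continuity of the finitely many linear functionals defining $J^V_M$ gives $\limsup_k m_{N_k} \le J_M^V(a^\star)$; then let $M\to\infty$ and use $J^V_M(a^\star)\downarrow J^V(a^\star) \le \max_{\overline{\mathcal{M}}_L} J^V$. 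Combining, $m_N \to \max_{\overline{\mathcal{M}}_L} J^V$ and any weak-star limit point $a^\star$ of $(a^N)$ satisfies $J^V(a^\star) = \max_{\overline{\mathcal{M}}_L} J^V$, i.e. is a maximizer.

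For part (3), under the analyticity hypothesis on $\partial\Omega$ the eigenfunctions $\phi_j$ extend real-analytically up to $\overline\Omega$, so each $\phi_j^2$ is real-analytic there. I would invoke the bathtub principle: for fixed $N$ the map $\chi_\omega \mapsto J^V_N(\chi_\omega)$ is maximized over $\mathcal{M}_L$ by reducing, via a minimax/Lagrangian argument exactly as in \cite{ptz}, to choosing $\omega^N$ as a superlevel set $\{\,\sum_{j=1}^N \mu_j \phi_j^2(x) > c\,\}$ for suitable convex weights $\mu_j \ge 0$, $\sum \mu_j = 1$, and a threshold $c$ fixed by the measure constraint $|\omega^N| = L|\Omega|$. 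Uniqueness (up to sets of measure zero) follows from strict concavity arguments together with the fact that a nonconstant real-analytic function has zero level sets of measure zero, so the superlevel set is determined unambiguously and no "plateau" ambiguity arises. Finally, $\omega^N = \{g_N > c\}$ with $g_N = \sum_{j=1}^N \mu_j\phi_j^2$ real-analytic is semi-analytic by definition, and by {\L}ojasiewicz's structure theorem for semi-analytic sets it has only finitely many connected components.

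The main obstacle I anticipate is part (3): transporting the \cite{ptz} optimality/uniqueness machinery to the perturbed eigenfunctions requires checking that the minimax exchange and the strict-concavity argument still go through when the $\phi_j$ are eigenfunctions of $-\Delta+V$ rather than $-\Delta$ — in particular that no $\phi_j^2$ is constant on a set of positive measure (guaranteed by real-analyticity and $\phi_j \not\equiv$ const) and that the weights $\mu_j$ selecting the optimal set are genuinely unique. The weak-star compactness and semicontinuity bookkeeping in parts (1)--(2) is routine; it is the geometric regularity of $\omega^N$ and the uniqueness claim that demand care, and these rely essentially on analyticity of $\partial\Omega$ propagating to analyticity of the eigenfunctions up to the boundary.
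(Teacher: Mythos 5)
Your proposal takes essentially the same route as the paper, which for this theorem simply states that the steps of the proof of Theorem 4.9 in \cite{ptz} carry over verbatim with the eigenfunctions of $-\Delta+V$ in place of those of $-\Delta$; your write-up in fact supplies more of the actual argument (weak-star compactness and upper semicontinuity for parts (1)--(2), the bathtub/minimax reduction for part (3)) than the paper does. The one point deserving care, which both you and the paper lean on without further justification, is the real-analyticity of the $\phi_j$ up to $\overline\Omega$ needed in part (3): for $V\in L^{\infty}(\Omega)$ this is not automatic from analyticity of $\partial\Omega$ alone, though the paper asserts it at the start of Section \ref{peturbmsec}.
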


We show that this relaxed problem allows for the determination of the observability constant under some assumptions on the flow. 
\begin{thm}\label{nogap}
Assume there exists a subsequence of the sequence of probability measures $\mu_j=\phi_j^2\,dx$, which converges vaguely to the measure $\frac{1}{|\Omega|}\,dx$ (Weak Quantum Ergodicity assumption with a potential). Then, the sequence of eigenfunctions $\phi_j$ is uniformly bounded in $L^{\infty}(\Omega)$ and 
\begin{align}
J^V(\chi_{\omega})=\sup\limits_{\omega \in \mathcal{M}_L}\inf\limits_{j\in \mathbb{N}^*}\int\limits_{\omega}\phi_j^2(x)\,dx=\sup\limits_{a\in \overline{\mathcal{M}}_L}\inf\limits_{j\in\mathbb{N}^*}\int\limits_{\Omega}a(x)\phi_j(x)^2\,dx=L.
\end{align}
\end{thm}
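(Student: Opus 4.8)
The plan is to mirror the argument of \cite{ptz} for the potential-free case, using the Weak Quantum Ergodicity (WQE) assumption with the potential as the sole structural input. First I would establish the uniform $L^\infty$ bound on the eigenfunctions. Since $-\Delta + V$ with $V\in L^\infty(\Omega)$ is self-adjoint and bounded below with the same domain as $-\Delta$ (Theorem~\ref{domain}), elliptic regularity gives $\phi_j\in H^2(\Omega)\cap C^0(\overline\Omega)$ for each fixed $j$; the point is uniformity in $j$. Here one uses that $(-\Delta+V)\phi_j=\lambda_j\phi_j$, so $\|\Delta\phi_j\|_{L^2}\le(\lambda_j+\|V\|_\infty)\|\phi_j\|_{L^2}=\lambda_j+\|V\|_\infty$, and then a Gagliardo--Nirenberg / Sobolev embedding argument together with the Weyl-type bound $\lambda_j\le b_2 j^{2/d}$ from Theorem~\ref{asymp} bounds $\|\phi_j\|_{L^\infty}$ polynomially in $j$. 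But polynomial growth is not enough; the uniform bound must come from WQE. The mechanism (as in \cite{ptz}) is: if a subsequence $\mu_{j_k}=\phi_{j_k}^2\,dx$ converges vaguely to $\tfrac1{|\Omega|}\,dx$, then for that subsequence $\int_\Omega \phi_{j_k}^2\,dx\to 1$ forces no mass to escape, and combined with the interior elliptic estimate $\|\phi_j\|_{L^\infty(K)}\lesssim_K \|\phi_j\|_{L^2(\Omega)}$ on compact $K$ (which is uniform in $j$ once one uses that $\lambda_j$ enters the estimate only through lower-order terms that can be absorbed), one deduces a uniform bound along the subsequence, and then a separate classical argument (e.g.\ \cite{ptz}, or a Bernstein-type estimate) upgrades this to \emph{all} $j$.

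Next I would prove the chain of equalities. The inequality $\sup_{\omega\in\mathcal M_L}\inf_j\int_\omega\phi_j^2\le L$ is immediate: for any fixed $\omega$ with $|\omega|=L|\Omega|$, the vague convergence $\mu_{j_k}\rightharpoonup \tfrac1{|\Omega|}\,dx$ gives $\int_\omega \phi_{j_k}^2\,dx \to \tfrac{|\omega|}{|\Omega|}=L$ (one must justify testing against $\chi_\omega$, which is not continuous — this is handled by sandwiching $\chi_\omega$ between continuous functions and using $|\partial\omega|$ having measure zero, or by approximating $\omega$ from inside and outside by open/closed sets and using regularity of Lebesgue measure together with the uniform $L^\infty$ bound just established). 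Hence $\inf_j\int_\omega\phi_j^2\le \liminf_k\int_\omega\phi_{j_k}^2 = L$, so the $\sup$ over $\omega$ is $\le L$. For the reverse inequality $\ge L$, I would use the relaxed formulation: the constant function $a\equiv L$ lies in $\overline{\mathcal M}_L$, and $\int_\Omega L\,\phi_j^2\,dx = L$ for every $j$ by normalization, so $\inf_j\int_\Omega L\phi_j^2\,dx = L$, giving $\sup_{a\in\overline{\mathcal M}_L}\inf_j\int_\Omega a\phi_j^2 \ge L$. Combined with the ``no-gap'' statement that $\sup$ over $\mathcal M_L$ equals $\sup$ over the convex relaxation $\overline{\mathcal M}_L$ — which follows from Theorem~\ref{main2}, part (2), since the relaxed maximum is approached by a sequence $a^N$ and, using the uniform $L^\infty$ bound, one can construct actual characteristic functions achieving values converging to it — all three quantities are squeezed to equal $L$.

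The main obstacle I expect is making the uniform $L^\infty(\Omega)$ bound on $(\phi_j)$ genuinely rigorous \emph{up to the boundary} and \emph{for all $j$ rather than just the ergodic subsequence}: the vague convergence hypothesis is only along a subsequence, so some additional input (a uniform interior estimate plus boundary regularity for the analytic/$C^2$ domain, or an argument that the full sequence inherits the bound) is needed, and this is precisely the step where the $L^\infty$ hypothesis on $V$ and the domain regularity are used. A secondary technical point is the justification of passing the vague limit through the discontinuous test function $\chi_\omega$; this is routine given the uniform bound (so that $\{\phi_j^2\}$ is uniformly integrable) but must be stated carefully. Once these two points are in place, the equalities follow by the soft squeezing argument above together with Theorem~\ref{main1}, Corollary~\ref{cmain1}, and Theorem~\ref{main2}.
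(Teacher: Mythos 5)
Your overall strategy --- mirror the proof of Theorem 3.5 in \cite{ptz} with $\phi_{j0}$ replaced by $\phi_j$ --- is exactly what the paper does (its proof of Theorem \ref{nogap} consists of precisely that remark), and your soft squeezing argument for the chain of equalities (upper bound by testing the vague limit, lower bound from the constant density $a\equiv L$ as in Lemma \ref{constant}) is the right skeleton. The genuine gap is in the first half, and you have correctly located it but not closed it: the uniform $L^{\infty}(\Omega)$ bound on $(\phi_j)_{j\in\mathbb{N}}$ cannot be derived from vague convergence of a subsequence of the measures $\mu_j=\phi_j^2\,dx$. The interior elliptic estimate you invoke, $\|\phi_j\|_{L^{\infty}(K)}\lesssim_K\|\phi_j\|_{L^2(\Omega)}$ ``uniform in $j$ once $\lambda_j$ is absorbed,'' is false: the equation is $(-\Delta+V-\lambda_j)\phi_j=0$ and the zeroth-order coefficient $\lambda_j$ is unbounded, so the constant in any such estimate necessarily grows with $\lambda_j$; the sharp general bound is of H\"ormander--Sogge type, $\|\phi_j\|_{L^{\infty}}\lesssim\lambda_j^{(d-1)/4}\|\phi_j\|_{L^2}$, and this growth is attained (e.g.\ by zonal spherical harmonics). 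Nor can weak-$*$ equidistribution of one subsequence control the sup norm of the full sequence, or even of that subsequence: vague convergence to the uniform measure is compatible with concentration on sets of shrinking measure. In \cite{ptz} the uniform $L^{\infty}$ bound is a standing \emph{hypothesis} alongside the quantum ergodicity assumption, not a consequence of it; the statement here has moved it into the conclusion, and no argument along your lines (or the paper's one-line proof) can recover it.

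If you instead take the uniform $L^{\infty}$ bound as an additional assumption, your argument goes through, with two repairs. First, testing the vague limit against $\chi_\omega$ for an arbitrary \emph{measurable} $\omega$ (whose boundary need not be Lebesgue-null, so sandwiching by continuous functions does not suffice) is justified precisely by the uniform bound on the densities $\phi_{j_k}^2$, which upgrades vague convergence of $\mu_{j_k}$ to weak-$*$ convergence of $\phi_{j_k}^2$ in $L^{\infty}$. Second, the no-gap between $\sup_{\omega\in\mathcal{M}_L}$ and $\sup_{a\in\overline{\mathcal{M}}_L}$ is not delivered by Theorem \ref{main2}(2), which only concerns the truncation $J_N^V\to J^V$ over the relaxed class; it requires the separate construction of \cite{ptz}, namely a sequence $\chi_{\omega_k}\rightharpoonup L$ for which $\inf_j\int_{\omega_k}\phi_j^2\,dx\to L$, with the tail in $j$ controlled uniformly by the $L^{\infty}$ bound and the finitely many low modes handled by hand.
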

The assumptions of the above Theorem are sufficient but not necessary to derive such a no-gap statement between the original formulation of the problem and the relaxed formulation. It is known that when $\Omega$ is a two-dimensional disk and $V(x)\equiv 0$, the same statement holds true, even though WQUE (weak quantum unique ergodicity) is not satisfied \cite{ptz}. 

\begin{proof}[Proof of Theorem \ref{main2}]
To formulate the proof, we use the same steps as in \cite{ptz} in the proof of Theorem 4.9. These steps follow identically using the eigenfunctions $\phi_j$ of the $-\Delta+V$ operator instead of the eigenfunctions of the $-\Delta$ operator. We omit the steps here. 
\end{proof}

\begin{proof}[Proof of Theorem \ref{nogap}]
To conclude the proof, we use the same steps as in \cite{ptz} in the proof of Theorem 3.5. These steps follow identically using the eigenfunctions $\phi_j$ of the $-\Delta+V$ operator instead of the eigenfunctions of the $-\Delta$ operator. We omit the steps here. 
 \end{proof}

\begin{lem}\label{constant}
The convexified problem $\sup\limits_{a\in\overline{\mathcal{M}}_L}J(a)$ has at least one solution and 
\begin{align}
\label{quantity}
\sup\limits_{a\in\overline{\mathcal{M}}_L}\inf\limits_{j\in\mathbb{N}^*}\int\limits_{\Omega}a(x)\phi_j^2(x)\,dx=L.
\end{align}
The supremum is reached for the constant function $a(\cdot)=L$ on $\Omega$.
\end{lem}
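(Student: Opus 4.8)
The plan is to prove the two matching inequalities $\sup_{a\in\overline{\mathcal M}_L}J(a)\ge L$ and $\sup_{a\in\overline{\mathcal M}_L}J(a)\le L$, and to read off the existence of a maximiser (and the fact that $a\equiv L$ is one) from the first. The lower bound is immediate: the constant function $a\equiv L$ belongs to $\overline{\mathcal M}_L$, since $L\in(0,1)$ forces $a\in L^\infty(\Omega,[0,1])$ and $\int_\Omega L\,dx=L|\Omega|$; moreover each $\phi_j$ is $L^2(\Omega)$-normalised, so $\int_\Omega L\,\phi_j^2\,dx=L$ for every $j$ and hence $J(L)=\inf_{j\in\mathbb{N}^*}\int_\Omega L\,\phi_j^2\,dx=L$. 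Once the reverse inequality is in hand, this same computation shows the supremum is attained and that $a\equiv L$ realises it, so no separate compactness argument is needed (for the record, $a\mapsto\int_\Omega a\phi_j^2\,dx$ is weak-$\ast$ continuous for each $j$, so $J$ is an infimum of weak-$\ast$ continuous functions, hence upper semicontinuous, and thus attains its maximum on the weak-$\ast$ compact set $\overline{\mathcal M}_L$).

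For the upper bound, fix $a\in\overline{\mathcal M}_L$ and put $b_j=\int_\Omega a\phi_j^2\,dx\in[0,1]$ (since $0\le a\le1$ and $\int_\Omega\phi_j^2=1$). For any $\lambda>0$ that is not an eigenvalue, the infimum of a sequence is bounded above by any of its averages, so with $N(\lambda):=\#\{j:\lambda_j\le\lambda\}$ and $G_\lambda:=\frac{1}{N(\lambda)}\sum_{\lambda_j\le\lambda}\phi_j^2\ge0$ we have $\inf_{j}b_j\le\frac{1}{N(\lambda)}\sum_{\lambda_j\le\lambda}b_j=\int_\Omega a(x)G_\lambda(x)\,dx$, where $\int_\Omega G_\lambda\,dx=1$. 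It therefore suffices to show $\int_\Omega a\,G_\lambda\,dx\to L$ as $\lambda\to\infty$; this mirrors the $V\equiv0$ computation of \cite{ptz}.

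The key input I would use is the local (pointwise) Weyl law: writing $e_\lambda(x)=\sum_{\lambda_j\le\lambda}\phi_j(x)^2$, one has $e_\lambda(x)=\gamma_d\lambda^{d/2}+o(\lambda^{d/2})$ for a.e.\ $x\in\Omega$ as $\lambda\to\infty$, with $\gamma_d$ the dimensional Weyl constant, and correspondingly $N(\lambda)=\int_\Omega e_\lambda\,dx=\gamma_d|\Omega|\lambda^{d/2}+o(\lambda^{d/2})$. This is classical for the Dirichlet Laplacian on the interior of a bounded domain, and the bounded potential affects only lower-order terms, since $-\Delta-\|V\|_{L^\infty(\Omega)}\le-\Delta+V\le-\Delta+\|V\|_{L^\infty(\Omega)}$ may be bracketed between Dirichlet and Neumann Laplacians on small cubes — the monotonicity used following Theorem~\ref{asymp}. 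Dividing the two asymptotics gives $G_\lambda(x)=e_\lambda(x)/N(\lambda)\to1/|\Omega|$ for a.e.\ $x\in\Omega$. Since $G_\lambda\ge0$, $G_\lambda\to\frac1{|\Omega|}$ a.e., and $\int_\Omega G_\lambda\,dx=1=\int_\Omega\frac1{|\Omega|}\,dx$ for every $\lambda$, Scheffé's lemma upgrades this to convergence in $L^1(\Omega)$, whence $\int_\Omega a\,G_\lambda\,dx\to\frac1{|\Omega|}\int_\Omega a\,dx=\frac{L|\Omega|}{|\Omega|}=L$. Thus $\inf_j b_j\le L$; as $a\in\overline{\mathcal M}_L$ was arbitrary, $\sup_{a\in\overline{\mathcal M}_L}J(a)\le L$, which together with the lower bound yields \eqref{quantity}, proves the supremum is attained, and identifies $a\equiv L$ as a maximiser.

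I expect the \emph{only} genuine difficulty to be extracting from the Weyl law precisely the mode of convergence that can be paired with a weight $a$ that is merely $L^\infty$ and possibly concentrated near $\partial\Omega$, where uniform remainder estimates for the spectral function are delicate. The plan sidesteps this by using only the a.e.\ pointwise Weyl asymptotics together with the (trivially exact) normalisation $\int_\Omega G_\lambda\,dx=1$ and invoking Scheffé's lemma — no uniform-in-$x$ remainder bound is required — while the bounded potential is absorbed into the cube-bracketing estimate already underlying Theorem~\ref{asymp}.
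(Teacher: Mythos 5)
Your proposal is correct and follows the same skeleton as the paper's proof: the lower bound comes from the constant function $a\equiv L$ together with $\|\phi_j\|_{L^2(\Omega)}=1$, existence of a maximiser from upper semicontinuity of $J$ as an infimum of weak-$\ast$ continuous linear functionals on the weak-$\ast$ compact set $\overline{\mathcal{M}}_L$, and the upper bound from dominating the infimum by a Ces\`aro average of the $\phi_j^2$ and letting that average converge to $|\Omega|^{-1}$. The only substantive divergence is in how that last convergence is justified. The paper invokes the \emph{uniform} convergence of $N^{-1}\sum_{j\le N}\phi_j^2$ to $|\Omega|^{-1}$ by analogy with Lemma 3.3 of \cite{ptz}, which is the delicate point (uniformity up to $\partial\Omega$, and its stability under an $L^\infty$ perturbation of the operator, are not free). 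You instead use only the a.e.\ pointwise Weyl asymptotics for the spectral function $e_\lambda(x)$ together with the exact normalisation $\int_\Omega G_\lambda\,dx=1$, and upgrade a.e.\ convergence to $L^1(\Omega)$ convergence via Scheff\'e's lemma; since the test weight $a$ is merely bounded, $L^1$ convergence of $G_\lambda$ is exactly what is needed. This is a more robust route for a general bounded domain and a bounded potential, at the cost of importing the local Weyl law (which you sketch via heat-kernel/bracketing comparisons rather than prove). Both arguments yield the same conclusion; yours trades the paper's citation of a uniform limit for a weaker, more defensible mode of convergence.
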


\begin{proof}[Sketch of the proof of Lemma \ref{constant}]
The first statement follows from the fact that $J(a)$ is upper semicontinuous for the $L^{\infty}$ topology. In order to prove the second statement, we use the Ces\`{a}ro means of eigenfucntions. The constant function shows the fact that \eqref{quantity} is bounded below by $L$. We have
\begin{align}
\sup\limits_{a\in\overline{\mathcal{M}}_L}\inf\limits_{j\in\mathbb{N}^*}\int\limits_{\Omega}a(x)\phi_j^2(x)\,dx \leq \inf\limits_{j\in\mathbb{N}^*} \frac{1}{N} \sum_{j=1}^N \int\limits_{\Omega}a^{\star}(x)\phi_j^2(x)\,dx,
\end{align}
where $a^{\star}$ is a solution of the convexified problem. By using a similar argument as in the proof of Lemma 3.3 in \cite{ptz} regarding the uniform $|\Omega|^{-1}$ limit of the sequence $N^{-1}\sum_{j=1}^N \phi_j^2$ of Ces\`{a}ro means, one can show that \eqref{quantity} is bounded above by $L$. (The properties of Ces\'{a}ro means for the eigenfunctions are trivially satisfied by the spectral theorem).

\end{proof}

\newpage

\section*{Acknowledgments}

A.~W.~acknowledges support by EPSRC grant EP/L01937X/1.

\end{document}